\begin{document}

\newtheorem*{etheorem}{Theorem}
\newtheorem{eproposition}{Proposition}[section]
\newtheorem{ecorollary}{Corollary}

\renewcommand{\thefootnote}{}

\title[On~the~subgroup separability of~the~free product of~groups]{On~the~subgroup separability\\ of~the~free product of~groups}

\author{E.~V.~Sokolov}
\address{Ivanovo State University, Russia}
\email{ev-sokolov@yandex.ru}

\begin{abstract}
Suppose that $\mathcal{C}$ is~a~root class of~groups (i.e.,~a~class of~groups that contains non-triv\-ial groups and~is~closed under taking subgroups and~unrestricted wreath products), $G$~is~the~free product of~residually $\mathcal{C}$\nobreakdash-groups~$A_{i}$ ($i \in \mathcal{I}$), and~$H$ is~a~subgroup of~$G$ satisfying a~non-triv\-ial identity. We~prove a~criterion for~the~$\mathcal{C}$\nobreakdash-sepa\-ra\-bil\-ity of~$H$ in~$G$. It~follows from~this criterion that, if~$\{\mathcal{V}_{j} \mid j \in \mathcal{J}\}$ is~a~family of~group varieties, each~$\mathcal{V}_{j}$ ($j \in \mathcal{J}$) is~distinct from~the~variety of~all groups, and~$\mathcal{V} = \bigcup_{j \in \mathcal{J}} \mathcal{V}_{j}$, then one can give a~description of~$\mathcal{C}$\nobreakdash-sep\-a\-ra\-ble $\mathcal{V}$\nobreakdash-sub\-groups of~$G$ provided such a~description is~known for~every group~$A_{i}$ ($i \in \mathcal{I}$).
\end{abstract}

\keywords{Residual properties, subgroup separability, free product of~groups, root classes of~groups}

\thanks{The~study was supported by~the~Russian Science Foundation grant No.~24-21-00307,\\ \url{http://rscf.ru/en/project/24-21-00307/}}

\maketitle

\section{Introduction. Statement of~results}\label{es01}

Let $\mathcal{C}$ be~a~class of~groups. Following~\cite{Malcev1958PIvPI}, we say that a~subgroup~$Y$ of~a~group~$X$ is~\emph{$\mathcal{C}$\nobreakdash-sep\-a\-ra\-ble} in~this group if, for~any element $x \in X \setminus Y$, there exists a~homomorphism~$\sigma$ of~$X$ onto~a~group from~$\mathcal{C}$ such that $x\sigma \notin Y\sigma$. Usually, the~$\mathcal{C}$\nobreakdash-sepa\-ra\-bil\-ity of~subgroups of~$X$ is~studied under the~assumption that $X$ is~\emph{residually a~$\mathcal{C}$\nobreakdash-group}. Recall that the~latter property is~equivalent to~the~$\mathcal{C}$\nobreakdash-sepa\-ra\-bil\-ity of~the~trivial subgroup.

It~is~well known that, if~the~class~$\mathcal{C}$ consists only of~finite groups and~the~group~$X$ is~finitely presented, then the~$\mathcal{C}$\nobreakdash-sepa\-ra\-bil\-ity of~the~subgroup~$Y$ implies the~existence of~an~algorithm that answers the~question of~whether a~given element of~$X$ belongs to~$Y$~\cite{Malcev1958PIvPI}. In~addition, the~$\mathcal{C}$\nobreakdash-sepa\-ra\-bil\-ity of~certain subgroups of~$X$ can sometimes serve as~one of~the~necessary and/or~sufficient conditions for~$X$ to~be~residually a~$\mathcal{C}$\nobreakdash-group. This relationship is~especially often found when $X$ is~a~group-theoretic construction and~$\mathcal{C}$ is~a~root class of~groups (see, for~example,~\cite{Loginova1999SMJ, Azarov2013MN, Tumanova2014MAIS, Tumanova2015IVM, Azarov2016SMJ, Sokolov2021SMJ2, Sokolov2022CA, SokolovTumanova2023SMJ}).

The~notion of~a~root class was~introduced in~\cite{Gruenberg1957PLMS}, and~the~equivalent definitions of~such a~class were~given in~\cite{Sokolov2015CA}. In~accordance with~one of~them, the~class~$\mathcal{C}$ is~called a~\emph{root class} if~it~contains non-triv\-ial groups and~is~closed under taking subgroups and~unrestricted wreath products. Examples of~root classes are~the~classes of~all finite groups, finite $p$\nobreakdash-groups (where $p$ is~a~prime number), periodic $\mathfrak{P}$\nobreakdash-groups of~finite exponent (where $\mathfrak{P}$ is~a~non-empty set of~primes), all solvable groups, and~all torsion-free groups. It~is~also easy to~show that, if~the~intersection of~a~family of~root classes contains a~non-triv\-ial group, then it~is~again a~root class.

Throughout the~paper, let $\mathfrak{P}(\mathcal{C})$ denote the~set of~prime numbers defined as~follows. If~the~class~$\mathcal{C}$ contains a~non-pe\-ri\-od\-ic group, then $\mathfrak{P}(\mathcal{C})$ is~the~set of~all primes. Otherwise, a~prime~$p$ belongs to~$\mathfrak{P}(\mathcal{C})$ if and~only if it~divides the~order of~an~element of~some $\mathcal{C}$\nobreakdash-group. Recall that the~subgroup~$Y$ of~the~group~$X$ is~said to~be~\emph{$\mathfrak{P}(\mathcal{C})^{\prime}$\nobreakdash-iso\-lat\-ed} in~$X$ if, for~each element $x \in X$ and~for each prime $q \notin \mathfrak{P}(\mathcal{C})$, the~inclusion $x^{q} \in Y$ means that $x \in Y$. It~is~clear that, if~$\mathfrak{P}(\mathcal{C})$ contains all prime numbers, then any subgroup is~$\mathfrak{P}(\mathcal{C})^{\prime}$\nobreakdash-iso\-lat\-ed.

Proposition~\ref{ep22} below asserts that every $\mathcal{C}$\nobreakdash-sep\-a\-ra\-ble subgroup of~$X$ is~$\mathfrak{P}(\mathcal{C})^{\prime}$\nobreakdash-iso\-lat\-ed. If~the~group~$X$ is~given by~generators and~defining relations, then the~latter property is~usually easier to~verify than the~$\mathcal{C}$\nobreakdash-sepa\-ra\-bil\-ity. Therefore, to~get a~criterion for~the~$\mathcal{C}$\nobreakdash-sepa\-ra\-bil\-ity of~subgroups of~$X$, it~suffices to~find a~description of~subgroups each of~which is~$\mathfrak{P}(\mathcal{C})^{\prime}$\nobreakdash-iso\-lat\-ed but~not~$\mathcal{C}$\nobreakdash-sep\-a\-ra\-ble in~$X$. Let~us call such subgroups \emph{$\mathcal{C}$\nobreakdash-de\-fec\-tive}.

In~this paper, for~a~given root class of~groups~$\mathcal{C}$ and~for the~(ordinary) free product~$G$ of~a~family of~groups, we study the~$\mathcal{C}$\nobreakdash-sepa\-ra\-bil\-ity of~subgroups of~$G$. A~free product almost always contains a~non-abelian free subgroup and~becomes a~free group when its factors are~all infinite cyclic. Therefore, any criterion for~the~$\mathcal{C}$\nobreakdash-sepa\-ra\-bil\-ity of~subgroups of~$G$ is~necessarily a~generalization of~the~corresponding result on~free groups.

It~is~known that, if~$\mathcal{C}$ is~a~root class of~groups, then any $\mathfrak{P}(\mathcal{C})^{\prime}$\nobreakdash-iso\-lat\-ed cyclic subgroup of~a~free group is~$\mathcal{C}$\nobreakdash-sep\-a\-ra\-ble in~this group~\cite[Proposition~5.3]{Sokolov2024SMJ}. If~we replace here the~word ``cyclic'' by~``finitely generated'', then the~resulting assertion does~not hold: the~corresponding counterexample is~given in~\cite{Bardakov2004SMJ}. Moreover, at~present, a~criterion for~the~$\mathcal{C}$\nobreakdash-sepa\-ra\-bil\-ity of~an~arbitrary finitely generated subgroup of~a~free group is~known only when $\mathcal{C}$ is~the~class of~all finite groups~\cite{Hall1949TAMS}. Therefore, when studying free products of~groups, it~makes sense to~impose additional conditions on~the~considered subgroups, which in~the~case of~a~free group are~equivalent to~the~property of~being cyclic. In~this article, such a~condition is~``to~satisfy a~non-triv\-ial identity''.

The~following theorem is~the~main result of~the~paper. For~a~given root class of~groups~$\mathcal{C}$ and~for~the~free product~$G$ of~a~family of~residually $\mathcal{C}$\nobreakdash-groups, it~completely solves the~question of~the~$\mathcal{C}$\nobreakdash-sepa\-ra\-bil\-ity of~a~subgroup of~$G$ that satisfies a~non-triv\-ial identity. It~makes sense to~note here that, by~Proposition~\ref{ep25} below, the~free product~$G$ is~residually a~$\mathcal{C}$\nobreakdash-group if and~only if all its factors have this property.

\begin{etheorem}
Suppose that $\mathcal{C}$ is~a~root class of~groups\textup{,} $G$~is~the~free product of~residually $\mathcal{C}$\nobreakdash-groups~$A_{i}$ \textup{(}$i \in \mathcal{I}$\textup{),} and~$H$ is~a~subgroup of~$G$ satisfying a~non-triv\-ial identity. Then $H$ is~$\mathcal{C}$\nobreakdash-de\-fec\-tive in~$G$ if and~only if it~is~conjugate to~a~$\mathcal{C}$\nobreakdash-de\-fec\-tive subgroup of~the~group~$A_{i}$ for~some $i \in \mathcal{I}$.
\end{etheorem}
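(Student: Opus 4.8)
The plan is to prove both directions, with the harder and more substantial one being the forward implication: if $H$ satisfies a nontrivial identity and is $\mathcal{C}$-defective in $G$, then $H$ is conjugate into some factor $A_i$ and is $\mathcal{C}$-defective there.

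\emph{Forward direction.} The first step is to use the structure theory of subgroups of a free product (Kurosh subgroup theorem) together with the identity hypothesis. A subgroup of a free product $G = \ast_{i} A_i$ that satisfies a nontrivial identity cannot contain a non-abelian free subgroup; by the Kurosh decomposition $H \cong F \ast \left( \ast_{k} (H \cap g_k A_{i_k} g_k^{-1}) \right)$ with $F$ free, the free part $F$ must be cyclic, and there can be at most one nontrivial free factor of the form $H \cap g A_i g^{-1}$ — otherwise $H$ would contain a free product of two nontrivial groups, hence a non-abelian free subgroup, contradicting the identity. A short case analysis then shows $H$ is either trivial, infinite cyclic with a generator of infinite order in $G$, or conjugate into a single factor $A_i$. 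The cases where $H$ is trivial or conjugate into a factor are the ones we must handle; the infinite-cyclic-with-infinite-order-generator case needs to be disposed of by showing such $H$ is never $\mathcal{C}$-defective, appealing to \cite[Proposition~5.3]{Sokolov2024SMJ} after mapping $G$ onto a suitable free quotient (or, when the generator lies outside all conjugates of factors, by a direct separability argument in the free product). If $H$ is trivial it is $\mathcal{C}$-separable by Proposition~\ref{ep25}, hence not $\mathcal{C}$-defective, so this case does not arise. Thus $H \leq g A_i g^{-1}$ for some $i$; replacing $H$ by its conjugate we may assume $H \leq A_i$. It then remains to show that $H$ is $\mathcal{C}$-defective \emph{in $A_i$}. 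One direction of this — that $\mathfrak{P}(\mathcal{C})'$-isolation of $H$ in $G$ forces it in $A_i$ — is immediate since isolation passes to subgroups. The other — that $\mathcal{C}$-separability of $H$ in $A_i$ would force it in $G$ — is exactly where we need a transfer result for $\mathcal{C}$-separability of subgroups of a factor into the whole free product. I expect this to follow from the machinery the paper develops (Propositions~\ref{ep22}, \ref{ep25} and presumably a lemma asserting that a $\mathcal{C}$-separable subgroup of a retract, or of a factor in a free product of residually-$\mathcal{C}$ groups, remains $\mathcal{C}$-separable); one uses the natural retraction-type homomorphisms $G \to A_i \ast (\text{quotient of the rest})$ and residual $\mathcal{C}$-ness of all the $A_j$.

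\emph{Backward direction.} Suppose $H$ is conjugate to a $\mathcal{C}$-defective subgroup $H_0 \leq A_i$; we may assume $H = H_0 \leq A_i \leq G$. First, $H$ is $\mathfrak{P}(\mathcal{C})'$-isolated in $G$: given $x \in G$ with $x^q \in H$ for a prime $q \notin \mathfrak{P}(\mathcal{C})$, the element $x^q$ lies in $A_i$ and has finite order (since $\mathfrak{P}(\mathcal{C})$ omits $q$ and $A_i$ is residually $\mathcal{C}$, elements outside factors or of infinite order cause trouble) — more carefully, by the normal form in a free product an element whose $q$-th power lies in a factor is itself conjugate into that factor, and a short argument using $\mathfrak{P}(\mathcal{C})'$-isolation of $H_0$ in $A_i$ together with isolation properties of factors (the relevant statement about $\mathfrak{P}(\mathcal{C})$ being closed under the operations, used via residual $\mathcal{C}$-ness) pins $x$ down inside $A_i$ and then inside $H_0$. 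Second, $H$ is \emph{not} $\mathcal{C}$-separable in $G$: if it were, then composing with the retraction $G \to A_i$ (kill all other factors) would exhibit $H_0$ as $\mathcal{C}$-separable in $A_i$ — wait, this only gives separability with respect to elements of $A_i$, so instead one argues that $\mathcal{C}$-separability of $H$ in $G$ restricts to $\mathcal{C}$-separability of $H \cap A_i = H$ in $A_i$ directly from the definition, using that any homomorphism $A_i \to C \in \mathcal{C}$ extends to $G \to C$ by killing the other factors. Either way we reach $H_0$ $\mathcal{C}$-separable in $A_i$, contradicting its $\mathcal{C}$-defectiveness. Hence $H$ is $\mathfrak{P}(\mathcal{C})'$-isolated but not $\mathcal{C}$-separable in $G$, i.e. $\mathcal{C}$-defective.

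\emph{Main obstacle.} The crux is the forward direction's transfer statement: that a $\mathcal{C}$-separable subgroup of a single factor $A_i$ of the free product $G$ (of residually $\mathcal{C}$-groups) is $\mathcal{C}$-separable in $G$ — i.e., that one can, given $g \in G \setminus A_i$ (or $g \in A_i \setminus H$), build a homomorphism of $G$ onto a $\mathcal{C}$-group separating $g$ from $H$, combining a separating map on $A_i$ with residual $\mathcal{C}$-ness of the other factors while staying inside the class $\mathcal{C}$ (here the closure of root classes under unrestricted wreath products, which yields closure under the relevant extensions and free-product-into-$\mathcal{C}$ constructions, is essential). The second real point of care is the Kurosh-theoretic reduction showing the identity forces $H$ to be cyclic or conjugate into a factor, and cleanly eliminating the "infinite cyclic, generator of infinite order, not conjugate into a factor" case via the known free-group result \cite[Proposition~5.3]{Sokolov2024SMJ}.
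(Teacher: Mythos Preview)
Your structural reduction in the forward direction has a genuine gap. You claim that if the Kurosh decomposition $H \cong F \ast \bigl(\ast_{k} (H \cap g_k A_{i_k} g_k^{-1})\bigr)$ has two nontrivial free factors then $H$ contains a non-abelian free subgroup, and conclude that $H$ is trivial, infinite cyclic, or conjugate into a single factor. That inference is false: the free product $C_2 \ast C_2$ (the infinite dihedral group) is a free product of two nontrivial groups, contains no non-abelian free subgroup, and satisfies plenty of nontrivial identities (it is metabelian). Concretely, if two of the Kurosh factors $H \cap g_k A_{i_k} g_k^{-1}$ each have order~$2$ and all others are trivial, then $H \cong C_2 \ast C_2$ is not cyclic and is not conjugate into any single $A_i$. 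So your trichotomy must be a tetrachotomy, with the extra case ``$H$ is the non-abelian split extension of an infinite cyclic group by a group of order~$2$, and $H$ is conjugate into no factor.'' This is exactly Statement~3 of Proposition~\ref{ep28}, and the paper devotes Proposition~\ref{ep31} to showing such an $H$ is $\mathcal{C}$-separable when it is $\mathfrak{P}(\mathcal{C})'$-isolated; the argument is not a one-liner (one must first show that the infinite cyclic normal subgroup $Z$ of $H$ is itself not conjugate into a factor, then use separability of $Z$ and the finite-index trick to separate $H$).

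Apart from this, your overall scheme is close to the paper's. The paper also deduces the structure of $H$ from Karrass--Solitar (Proposition~\ref{ep27}) rather than Kurosh directly, handles the infinite cyclic ``not in a conjugate of a factor'' case via $\mathcal{C}$-quasi-regularity (Propositions~\ref{ep29}--\ref{ep20}) rather than by mapping to a free quotient, and proves the key transfer statement you identify as the crux --- that a subgroup of a single factor can be separated from elements outside that factor --- as Proposition~\ref{ep32}, using the natural map onto a free product of finite $\mathcal{C}$-quotients of the factors and then Proposition~\ref{ep26}. Your backward direction is essentially the paper's, though your isolation argument is tangled; the paper does it cleanly by observing that $A_i$ is a retract, hence $\mathcal{C}$-separable in $G$, hence $\mathfrak{P}(\mathcal{C})'$-isolated, and then applying transitivity of isolation (Proposition~\ref{ep22}).
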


Thus, to~get a~description of~$\mathcal{C}$\nobreakdash-sep\-a\-ra\-ble subgroups of~$G$ which satisfy a~non-triv\-ial identity, it~is~sufficient to~have such a~description for~every free factor~$A_{i}$ ($i \in \mathcal{I}$). The~next two corollaries follow directly from~the~above theorem.

\begin{ecorollary}\label{ec01}
Suppose that $\mathcal{C}$ is~a~root class of~groups and~$G$ is~the~free product of~residually $\mathcal{C}$\nobreakdash-groups~$A_{i}$ \textup{(}$i \in \mathcal{I}$\textup{)}. Suppose also that $\{\mathcal{V}_{j} \mid j \in \mathcal{J}\}$ is~a~family of~varieties of~groups\textup{,} each~$\mathcal{V}_{j}$ \textup{(}$j \in \mathcal{J}$\textup{)} is~distinct from~the~variety of~all groups\textup{,} and~$\mathcal{V} = \bigcup_{j \in \mathcal{J}} \mathcal{V}_{j}$. Then a~$\mathcal{V}$\nobreakdash-sub\-group~$H$ of~the~group $G$ is~$\mathcal{C}$\nobreakdash-de\-fec\-tive in~this group if and~only if it~is~conjugate to~a~$\mathcal{C}$\nobreakdash-de\-fec\-tive $\mathcal{V}$\nobreakdash-sub\-group of~the~group~$A_{i}$ for~some $i \in \mathcal{I}$. In~particular\textup{,} if~every group~$A_{i}$ \textup{(}$i \in \mathcal{I}$\textup{)} has the~property of~$\mathcal{C}$\nobreakdash-sepa\-ra\-bil\-ity of~all $\mathfrak{P}(\mathcal{C})^{\prime}$\nobreakdash-iso\-lat\-ed $\mathcal{V}$\nobreakdash-sub\-groups\textup{,} then the~free product~$G$ also has this property.
\end{ecorollary}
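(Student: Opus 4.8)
The plan is to deduce Corollary~\ref{ec01} from the main Theorem by verifying that varieties distinct from the variety of all groups, together with unions of such varieties, behave well with respect to passage to subgroups and conjugates, and by noting that a group lies in $\mathcal{V} = \bigcup_{j} \mathcal{V}_{j}$ precisely when it satisfies a non-trivial identity of one of the $\mathcal{V}_{j}$. First I would record the elementary observations: (i)~a variety $\mathcal{V}_{j}$ different from the variety of all groups is defined by some non-trivial identity $w_{j} \equiv 1$, so any $\mathcal{V}_{j}$-group satisfies a non-trivial identity; (ii)~$\mathcal{V}$-groups are exactly those groups satisfying at least one of the $w_{j} \equiv 1$, hence every $\mathcal{V}$-subgroup of $G$ satisfies a non-trivial identity and the Theorem applies to it; and (iii)~the class of $\mathcal{V}$-groups is closed under taking subgroups and under isomorphism, in particular under conjugation inside an ambient group.

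Next, given a $\mathcal{V}$-subgroup $H$ of $G$, I would apply the Theorem directly: $H$ is $\mathcal{C}$-defective in $G$ if and only if it is conjugate to a $\mathcal{C}$-defective subgroup $K$ of some $A_{i}$. The only thing to check is that this $K$ can be taken to be a $\mathcal{V}$-subgroup, but that is immediate from observation (iii): if $H$ is conjugate to $K$ then $K \cong H$, so $K$ inherits membership in the variety $\mathcal{V}_{j}$ (equivalently, $K$ satisfies whichever $w_{j} \equiv 1$ that $H$ satisfies). Conversely, if $H$ is conjugate to a $\mathcal{C}$-defective $\mathcal{V}$-subgroup of some $A_{i}$, the Theorem gives that $H$ is $\mathcal{C}$-defective in $G$. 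This establishes the stated equivalence.

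For the ``in particular'' clause I would argue contrapositively. Suppose every $A_{i}$ ($i \in \mathcal{I}$) has the property that all its $\mathfrak{P}(\mathcal{C})^{\prime}$-isolated $\mathcal{V}$-subgroups are $\mathcal{C}$-separable; equivalently, none of the $A_{i}$ has a $\mathcal{C}$-defective $\mathcal{V}$-subgroup. Let $H$ be a $\mathfrak{P}(\mathcal{C})^{\prime}$-isolated $\mathcal{V}$-subgroup of $G$ that fails to be $\mathcal{C}$-separable; then $H$ is by definition $\mathcal{C}$-defective in $G$, so by the first part of the corollary $H$ is conjugate to a $\mathcal{C}$-defective $\mathcal{V}$-subgroup of some $A_{i}$, contradicting the hypothesis. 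Hence no such $H$ exists, i.e.\ $G$ has the property of $\mathcal{C}$-separability of all $\mathfrak{P}(\mathcal{C})^{\prime}$-isolated $\mathcal{V}$-subgroups.

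I do not expect any genuine obstacle here: the corollary is essentially a translation of the Theorem into the language of varieties, and all the work has already been done in proving that $\mathcal{C}$-defectiveness presupposes $\mathfrak{P}(\mathcal{C})^{\prime}$-isolation (Proposition~\ref{ep22}) and in the Theorem itself. The only point requiring a word of care is the reduction of ``satisfies a non-trivial identity'' to ``lies in $\mathcal{V}$'': I should make explicit that a union of proper subvarieties, while not itself a variety in general, still consists entirely of groups each satisfying some non-trivial law, which is exactly the hypothesis the Theorem needs — it does not require $H$ to satisfy one fixed law, only some non-trivial law. This is the one spot where the phrasing of the Theorem (``satisfying a non-trivial identity'' rather than ``lying in a fixed proper variety'') is genuinely used.
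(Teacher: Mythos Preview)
Your proposal is correct and matches the paper's own treatment: the paper states that Corollary~\ref{ec01} follows directly from the Theorem and gives no separate argument, and the details you spell out---that each $\mathcal{V}$-group lies in some proper variety $\mathcal{V}_{j}$ and hence satisfies a non-trivial identity, that conjugation preserves $\mathcal{V}$-membership, and the contrapositive for the final clause---are exactly the routine verifications this entails.
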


\begin{ecorollary}\label{ec02}
Suppose that $\mathcal{C}$ is~a~root class of~groups and~$G$ is~the~free product of~residually $\mathcal{C}$\nobreakdash-groups~$A_{i}$ \textup{(}$i \in \mathcal{I}$\textup{)}. Then a~cyclic subgroup~$H$ of~the~group~$G$ is~$\mathcal{C}$\nobreakdash-de\-fec\-tive in~this group if and~only if it~is~conjugate to~a~$\mathcal{C}$\nobreakdash-de\-fec\-tive cyclic subgroup of~the~group~$A_{i}$ for~some $i \in \mathcal{I}$. In~particular\textup{,} if~every group~$A_{i}$ \textup{(}$i \in \mathcal{I}$\textup{)} has the~property of~$\mathcal{C}$\nobreakdash-sepa\-ra\-bil\-ity of~all $\mathfrak{P}(\mathcal{C})^{\prime}$\nobreakdash-iso\-lat\-ed cyclic subgroups\textup{,} then the~free product~$G$ also has this property.
\end{ecorollary}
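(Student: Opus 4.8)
The plan is to derive this corollary directly from the main Theorem, since the only gap between the Theorem's hypothesis (a subgroup satisfying a non-trivial identity) and the present cyclic setting is the passage between ``cyclic'' and ``satisfies a non-trivial identity,'' together with the stability of cyclicity under conjugation. Accordingly, I would first observe that every cyclic subgroup is abelian and hence satisfies the non-trivial identity $[x,y] = 1$. Thus any cyclic subgroup $H$ of $G$ meets the hypothesis of the Theorem, which applies verbatim: $H$ is $\mathcal{C}$-defective in $G$ if and only if it is conjugate to a $\mathcal{C}$-defective subgroup $K$ of some factor $A_{i}$.

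It then remains to upgrade ``subgroup'' to ``cyclic subgroup'' in the Theorem's conclusion, in both directions. For the forward implication, if $H = \langle h\rangle$ is $\mathcal{C}$-defective and $g^{-1}Hg = K \leq A_{i}$, then $K = \langle g^{-1}hg\rangle$ is again cyclic, so $H$ is conjugate to a $\mathcal{C}$-defective \emph{cyclic} subgroup of $A_{i}$. For the converse, any subgroup conjugate to a cyclic subgroup is itself cyclic, so the stated hypothesis already forces $H$ to be cyclic, and the Theorem then yields directly that $H$ is $\mathcal{C}$-defective in $G$. This establishes the equivalence.

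For the concluding ``in particular'' clause, I would unwind the definition of $\mathcal{C}$-defectiveness (``$\mathfrak{P}(\mathcal{C})^{\prime}$-isolated but not $\mathcal{C}$-separable''). To say that a factor $A_{i}$ has the property of $\mathcal{C}$-separability of all its $\mathfrak{P}(\mathcal{C})^{\prime}$-isolated cyclic subgroups is precisely to say that $A_{i}$ possesses no $\mathcal{C}$-defective cyclic subgroup. If this holds for every $i \in \mathcal{I}$, then by the equivalence just established $G$ likewise possesses no $\mathcal{C}$-defective cyclic subgroup, which is exactly the assertion that every $\mathfrak{P}(\mathcal{C})^{\prime}$-isolated cyclic subgroup of $G$ is $\mathcal{C}$-separable.

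Since each step reduces to an immediate observation once the Theorem is in hand, I do not anticipate any genuine obstacle here; the only point requiring a moment's care is recording that conjugation preserves cyclicity, so that the factor subgroup produced by the Theorem may indeed be taken cyclic rather than merely conjugate to a cyclic group.
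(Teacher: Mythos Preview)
Your proposal is correct and matches the paper's approach: the paper does not give a separate proof of this corollary but simply declares that it (together with Corollary~\ref{ec01}) follows directly from the main Theorem. Your write-up merely spells out the evident details---that cyclic groups satisfy the identity $[x,y]=1$ and that conjugation preserves cyclicity---which is exactly the intended derivation.
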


Let~us note that Corollary~\ref{ec02} generalizes Theorem~5 from~\cite{Stebe1968CPAM}, which asserts that the~free product of~$\pi_{c}$\nobreakdash-groups is~again a~$\pi_{c}$\nobreakdash-group (recall that a~\emph{$\pi_{c}$\nobreakdash-group} is~a~group whose cyclic subgroups are~all $\mathcal{F}$\nobreakdash-sep\-a\-ra\-ble, where $\mathcal{F}$ is~the~class of~all finite groups). The~analogue of~this result for~finitely generated subgroups was~get in~\cite{Romanovskii1969MUSSRI}, and~Corollary~\ref{ec01} complements~it in~some sense. The~proof of~the~formulated theorem is~given in~Section~\ref{es03}. Section~\ref{es02} contains a~number of~assertions necessary for~this proof.

\section{Some auxiliary concepts and~results}\label{es02}

\vspace*{2pt}

Given a~class of~groups~$\mathcal{C}$ and~a~group~$X$, let~us denote by~$\mathcal{C}^{*}(X)$ the~family of~normal subgroups of~$X$ defined as~follows: $N \in \mathcal{C}^{*}(X)$ if and~only if $X/N \in \mathcal{C}$. It~is~obvious that the~elements of~$\mathcal{C}^{*}(X)$ are~precisely the~kernels of~all possible homomorphisms of~$X$ onto~groups from~$\mathcal{C}$. Therefore, a~subgroup~$Y$ of~the~group~$X$ is~$\mathcal{C}$\nobreakdash-sep\-a\-ra\-ble in~this group if and~only if, for~each element $x \in X \setminus Y$, there exists a~subgroup $N \in \mathcal{C}^{*}(X)$ such that $x \notin YN$.

\begin{eproposition}\label{ep21}
\textup{\cite[Proposition~2]{SokolovTumanova2020IVM}}
If~$\mathcal{C}$ is~a~class of~groups closed under taking subgroups and~direct products of~a~finite number of~factors\textup{,} $X$~is~a~group\textup{,} and~$Y,Z \in \mathcal{C}^{*}(X)$\textup{,} then $Y \cap Z \in \mathcal{C}^{*}(X)$.
\end{eproposition}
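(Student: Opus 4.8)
The plan is to realize $X/(Y\cap Z)$ as~a~subgroup of~the~direct product $(X/Y)\times(X/Z)$ and~then apply the~two closure properties of~$\mathcal{C}$. Since $Y$ and~$Z$ are~normal in~$X$, so~is~$Y\cap Z$, and~hence $X/(Y\cap Z)$ is~a~group. Consider the~homomorphism $\varphi\colon X\to(X/Y)\times(X/Z)$ acting by~the~rule $x\varphi=(xY,xZ)$. An~element $x\in X$ lies in~$\ker\varphi$ if and~only if $xY=Y$ and~$xZ=Z$, i.e.,~if and~only if $x\in Y\cap Z$; thus $\ker\varphi=Y\cap Z$. By~the~first isomorphism theorem, $X/(Y\cap Z)$ is~isomorphic to~the~image $X\varphi$, which is~a~subgroup of~$(X/Y)\times(X/Z)$.

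It~remains to~note that $X/Y\in\mathcal{C}$ and~$X/Z\in\mathcal{C}$ by~the~definition of~$\mathcal{C}^{*}(X)$, so~the~product $(X/Y)\times(X/Z)$ belongs to~$\mathcal{C}$ because this class is~closed under direct products of~a~finite number of~factors. Since $\mathcal{C}$ is~also closed under taking subgroups, we~get $X\varphi\in\mathcal{C}$ and~therefore $X/(Y\cap Z)\in\mathcal{C}$, which is~precisely the~assertion $Y\cap Z\in\mathcal{C}^{*}(X)$.

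There is~no essential difficulty here: the~statement is~a~direct consequence of~the~first isomorphism theorem combined with~the~stated closure properties. The~only point worth keeping in~mind is~the~choice of~the~``diagonal'' homomorphism into~the~product of~the~two quotients, which converts the~intersection of~two kernels into~the~kernel of~a~single map.
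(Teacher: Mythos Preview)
Your proof is correct and is the standard argument via the diagonal embedding into the product of quotients. The paper itself does not supply a proof of this proposition but merely cites \cite[Proposition~2]{SokolovTumanova2020IVM}, so there is nothing further to compare.
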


\begin{eproposition}\label{ep22}
Suppose that $\mathcal{C}$ is~an~arbitrary class of~groups\textup{,} $X$~is~a~group\textup{,} and~$Y$ is~a~subgroup of~$X$. Then the~following statements hold.

\textup{1.\hspace{1ex}}If~the~subgroup~$Y$ is~$\mathcal{C}$\nobreakdash-sep\-a\-ra\-ble in~$X$\textup{,} then it~is~$\mathfrak{P}(\mathcal{C})^{\prime}$\nobreakdash-iso\-lat\-ed in~this group.

\textup{2.\hspace{1ex}}If~the~subgroup~$Y$ is~$\mathfrak{P}(\mathcal{C})^{\prime}$\nobreakdash-iso\-lat\-ed in~$X$ and~$Z$ is~a~$\mathfrak{P}(\mathcal{C})^{\prime}$\nobreakdash-iso\-lat\-ed subgroup of~$Y$\textup{,} then $Z$ is~$\mathfrak{P}(\mathcal{C})^{\prime}$\nobreakdash-iso\-lat\-ed in~$X$.
\end{eproposition}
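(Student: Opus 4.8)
The proof of Proposition~\ref{ep22} splits naturally into its two parts, and both are elementary exercises once the relevant definitions are unfolded.

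\medskip

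The plan is as follows. For part~1, I would argue by contraposition: suppose $Y$ is \emph{not} $\mathfrak{P}(\mathcal{C})'$-isolated in~$X$, so there exist an element $x \in X \setminus Y$ and a prime $q \notin \mathfrak{P}(\mathcal{C})$ with $x^{q} \in Y$. I must show $Y$ is not $\mathcal{C}$-separable, i.e.\ produce no homomorphism onto a $\mathcal{C}$-group that separates $x$ from~$Y$. So let $\sigma \colon X \to C$ be any homomorphism with $C \in \mathcal{C}$; I need $x\sigma \in Y\sigma$. Since $x^{q} \in Y$, we have $(x\sigma)^{q} \in Y\sigma$. Now the key point is the defining property of $\mathfrak{P}(\mathcal{C})$: because $q \notin \mathfrak{P}(\mathcal{C})$, no $\mathcal{C}$-group has an element of order divisible by~$q$; in particular $\mathcal{C}$ contains no non-periodic group and $q$ does not divide the order of any element of $C$. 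Hence the map $c \mapsto c^{q}$ is injective on the cyclic subgroup $\langle x\sigma \rangle$ — indeed $q$ is coprime to the order of $x\sigma$, so raising to the $q$-th power is an automorphism of $\langle x\sigma\rangle$, and one can write $x\sigma = (x\sigma)^{qk}$ for a suitable integer~$k$. Therefore $x\sigma = \big((x\sigma)^{q}\big)^{k} = (x^{q}\sigma)^{k} \in Y\sigma$, as required. The main (and only) subtlety is making sure the order of $x\sigma$ is finite and coprime to~$q$, which is exactly what the definition of $\mathfrak{P}(\mathcal{C})$ guarantees when $q \notin \mathfrak{P}(\mathcal{C})$.

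\medskip

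For part~2, suppose $Y$ is $\mathfrak{P}(\mathcal{C})'$-isolated in~$X$ and $Z \leqslant Y$ is $\mathfrak{P}(\mathcal{C})'$-isolated in~$Y$; I want $Z$ to be $\mathfrak{P}(\mathcal{C})'$-isolated in~$X$. Take $x \in X$ and a prime $q \notin \mathfrak{P}(\mathcal{C})$ with $x^{q} \in Z$. Then $x^{q} \in Z \leqslant Y$, so isolatedness of $Y$ in $X$ gives $x \in Y$. Now both $x$ and $x^{q}$ lie in~$Y$, with $x^{q} \in Z$, so isolatedness of $Z$ in $Y$ gives $x \in Z$. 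This is immediate and needs no further comment.

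I expect no real obstacle here; the whole content is the arithmetic remark in part~1 that a prime outside $\mathfrak{P}(\mathcal{C})$ cannot divide the order of an element of any $\mathcal{C}$-group, so that $q$-th powering is invertible on finite cyclic subgroups of $\mathcal{C}$-groups.
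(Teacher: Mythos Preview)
Your argument is correct in both parts. The paper itself does not spell out a proof: it cites \cite[Proposition~4.1]{Sokolov2024SMJ} for part~1 and remarks that part~2 is immediate from the definition, so your write-up simply supplies the elementary details the paper defers to a reference, and your part~2 matches the paper's one-line justification.
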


\begin{proof}
The~first statement coincides with~Proposition~4.1 from~\cite{Sokolov2024SMJ}, while the~second one follows easily from~the~definition of~a~$\mathfrak{P}(\mathcal{C})^{\prime}$\nobreakdash-iso\-lat\-ed subgroup.
\end{proof}

Recall that a~subgroup~$Y$ of~a~group~$X$ is~said to~be~a~\emph{retract} of~this group if there exists a~homomorphism $\sigma\colon X \to Y$ acting identically on~$Y$. This homomorphism is~usually referred to~as~a~\emph{retracting} one.

\begin{eproposition}\label{ep23}
\textup{\cite[Proposition~3]{SokolovTumanova2016SMJ}}
Let $\mathcal{C}$ be~a~class of~groups closed under taking subgroups and~extensions. If~$X$ is~a~residually $\mathcal{C}$\nobreakdash-group\textup{,} then any retract of~$X$ is~a~$\mathcal{C}$\nobreakdash-sep\-a\-ra\-ble subgroup.
\end{eproposition}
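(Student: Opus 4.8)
The plan is to work directly with a retracting homomorphism rather than to invoke any finer structure theory. Fix $\rho\colon X \to Y$ acting identically on~$Y$ and put $K = \ker\rho$. Since $\rho$ fixes $Y$ pointwise, its image is exactly~$Y$ and in~fact $Y = \{a \in X \mid a\rho = a\}$, so that $X = YK$ with $Y \cap K = 1$. Given $x \in X \setminus Y$, I would set $z = (x\rho)^{-1}x$ and note that $z \in K$ and $z \neq 1$ (otherwise $x = x\rho \in Y$, a~contradiction). Because $X$ is~residually a~$\mathcal{C}$\nobreakdash-group, the~trivial subgroup is~$\mathcal{C}$\nobreakdash-separable, so there is~some $L \in \mathcal{C}^{*}(X)$ with $z \notin L$. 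The~whole task then becomes upgrading $L$ to~a~normal subgroup $N \in \mathcal{C}^{*}(X)$ that still excludes~$z$ but~is~\emph{compatible} with~$\rho$ in~the~sense that $N\rho \subseteq N$.

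The~heart of~the~argument is~the~following identity: if~$N \trianglelefteq X$ satisfies $N\rho \subseteq N$, then for~every $a \in X$ one has $a \in YN$ if and~only if $(a\rho)^{-1}a \in N$. The~backward implication is~immediate from~$a = (a\rho)\bigl((a\rho)^{-1}a\bigr)$ with $a\rho \in Y$. For~the~forward one, writing $a = yn$ with $y \in Y$ and~$n \in N$ and~applying~$\rho$ gives $y = (a\rho)(n\rho)^{-1}$, whence $(a\rho)^{-1}a = (n\rho)^{-1}n$; this lies in~$N$ because both $n$ and~$n\rho \in N\rho \subseteq N$ lie in~$N$. Applied to~$a = x$, this reduces the~entire problem to~producing a~single $\rho$\nobreakdash-invariant subgroup $N \in \mathcal{C}^{*}(X)$ with $z \notin N$, after which $z \notin N$ forces $x \notin YN$ and~the~$\mathcal{C}$\nobreakdash-separability of~$Y$ follows.

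To~build such an~$N$ I would intersect $L$ with~its $\rho$\nobreakdash-pullback, setting $N = L \cap \{a \in X \mid a\rho \in L\}$. This $N$ is~automatically $\rho$\nobreakdash-invariant: if~$a\rho \in L$ then, since $\rho$ is~idempotent on~its image, $(a\rho)\rho = a\rho \in L$, so $a\rho \in N$; and~$N \subseteq L$ keeps $z$ outside. The~one genuine difficulty, and~the~step where the~hypotheses on~$\mathcal{C}$ are~spent, is~checking that $N \in \mathcal{C}^{*}(X)$. One cannot simply take $N = K$, since that would force $Y \cong X/K \in \mathcal{C}$, which is~not assumed, nor $N = L$, which need not be~$\rho$\nobreakdash-invariant; the~pullback circumvents both defects. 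Indeed $\{a \in X \mid a\rho \in L\}$ is~the~$\rho$\nobreakdash-preimage of~$L \cap Y \trianglelefteq Y$, hence normal in~$X$ with $X/\{a \in X \mid a\rho \in L\} \cong Y/(L \cap Y) \cong YL/L \leq X/L \in \mathcal{C}$, so subgroup\nobreakdash-closure places it in~$\mathcal{C}^{*}(X)$. Finally, since extension\nobreakdash-closure together with subgroup\nobreakdash-closure makes $\mathcal{C}$ closed under direct products of~finitely many factors (a~product $A \times B$ being an~extension of~$A$ by~$B$), Proposition~\ref{ep21} applies to~the~two members of~$\mathcal{C}^{*}(X)$ and~gives $N \in \mathcal{C}^{*}(X)$. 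Combined with~the~identity above, this completes the~proof.
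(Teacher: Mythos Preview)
Your proof is correct. Note, however, that the paper does not supply its own proof of this proposition: it is stated with a citation to \cite[Proposition~3]{SokolovTumanova2016SMJ} and used as a black box, so there is no in-paper argument to compare against.

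For what it is worth, your argument is essentially the standard one and is carried out cleanly. A couple of minor remarks. First, the decomposition $X = YK$ with $Y \cap K = 1$ and the description $Y = \{a \in X \mid a\rho = a\}$ are not actually used anywhere after you introduce them; the proof runs entirely on the element $z = (x\rho)^{-1}x$ and the $\rho$\nobreakdash-invariance of~$N$, so those lines could be dropped. Second, once you have your $\rho$\nobreakdash-invariant $N$, the ``key identity'' can be shortened: since $z \in K \subseteq \rho^{-1}(L \cap Y)$ automatically (because $z\rho = 1 \in L$), the condition $z \notin N$ is equivalent to $z \notin L$, and the equality $YN = \{a \in X \mid (a\rho)^{-1}a \in N\}$ follows in one line from $N\rho \subseteq N$ and $a = (a\rho)\cdot(a\rho)^{-1}a$. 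Your verification that $N \in \mathcal{C}^{*}(X)$ via Proposition~\ref{ep21}, together with the observation that subgroup- and extension-closure of~$\mathcal{C}$ yield closure under finite direct products, is exactly right and is where both hypotheses on~$\mathcal{C}$ are genuinely spent.
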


In~what follows, let $\operatorname{sgp}\{S\}$ denote the~subgroup generated by~a~set~$S$.

\begin{eproposition}\label{ep24}
If~$G$ is~the~free product of~groups~$A_{i}$ \textup{(}$i \in \mathcal{I}$\textup{),} then\textup{,} for~each non-empty subset $\mathcal{J} \subseteq \mathcal{I}$\textup{,} the~subgroup $A_{\mathcal{J}} = \operatorname{sgp}\{A_{j} \mid j \in \mathcal{J}\}$ is~a~free factor of~$G$\textup{,} serves as~a~retract of~this group\textup{,} and~splits into~the~free product of~the~groups~$A_{j}$ \textup{(}$j \in \mathcal{J}$\textup{)}.
\end{eproposition}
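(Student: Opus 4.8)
The plan is to realize $G$ as an external free product of two factors, one of them being an isomorphic copy of the abstractly formed free product of the groups $A_{j}$ ($j \in \mathcal{J}$), and then to identify that factor with~$A_{\mathcal{J}}$. All three assertions will drop out of this single identification.

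If~$\mathcal{J} = \mathcal{I}$, then $A_{\mathcal{J}} = G$ and everything holds trivially (with the trivial complementary free factor and the identity retraction), so I would assume $\mathcal{I} \setminus \mathcal{J} \neq \varnothing$. Let $B$ be the free product of the groups $A_{j}$ ($j \in \mathcal{J}$) and $C$ the free product of the groups $A_{i}$ ($i \in \mathcal{I} \setminus \mathcal{J}$), both formed externally, and put $P = B \ast C$. First I would invoke the universal property of the free product $G = \ast_{i \in \mathcal{I}} A_{i}$ to obtain a homomorphism $\varphi \colon G \to P$ whose restriction to $A_{j}$ ($j \in \mathcal{J}$) is the composite $A_{j} \hookrightarrow B \hookrightarrow P$ and whose restriction to $A_{i}$ ($i \in \mathcal{I} \setminus \mathcal{J}$) is the composite $A_{i} \hookrightarrow C \hookrightarrow P$. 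In the opposite direction, the universal properties of~$B$, of~$C$, and then of~$P$ yield a homomorphism $\psi \colon P \to G$ that on~$B$ is induced by the embeddings $A_{j} \hookrightarrow G$ ($j \in \mathcal{J}$) and on~$C$ by the embeddings $A_{i} \hookrightarrow G$ ($i \in \mathcal{I} \setminus \mathcal{J}$).

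Next I would check that $\psi\varphi$ restricts to the identity on every $A_{i}$ ($i \in \mathcal{I}$) and that $\varphi\psi$ restricts to the identity on~$B$ and on~$C$; by the relevant universal properties this forces $\psi\varphi = \mathrm{id}_{G}$ and $\varphi\psi = \mathrm{id}_{P}$, so $\varphi$ is an isomorphism $G \to P = B \ast C$. Since $\psi$ carries $B$ onto $\operatorname{sgp}\{A_{j} \mid j \in \mathcal{J}\} = A_{\mathcal{J}}$ and $\psi = \varphi^{-1}$, the map $\psi$ restricts to an isomorphism $B \to A_{\mathcal{J}}$; hence $G = A_{\mathcal{J}} \ast \psi(C)$, which shows that $A_{\mathcal{J}}$ is a free factor of~$G$ and simultaneously that $A_{\mathcal{J}} \cong B$ is the free product of the groups $A_{j}$ ($j \in \mathcal{J}$). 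For the retract assertion I would compose $\varphi$ with the canonical projection $P = B \ast C \to B$ that kills~$C$ and then with the isomorphism $B \to A_{\mathcal{J}}$ just obtained; the resulting homomorphism $G \to A_{\mathcal{J}}$ restricts to the identity on each $A_{j}$ ($j \in \mathcal{J}$), hence on all of $A_{\mathcal{J}}$, so it is a retracting homomorphism.

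The argument is entirely formal and uses only the universal property of free products together with the fact that each factor embeds into the free product containing it. The sole point requiring attention is the careful bookkeeping of the several universal properties and the verification that the composite maps $\psi\varphi$ and $\varphi\psi$ act identically on the generating factors; I expect no genuine obstacle beyond this routine checking.
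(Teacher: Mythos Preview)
Your proposal is correct and follows essentially the same route as the paper: both arguments rest on the universal property of the free product to produce the retraction and to identify $A_{\mathcal{J}}$ with the free product of the $A_{j}$. The paper is terser---it constructs the retracting homomorphism first and then waves at ``grouping the generators and relations in the presentation of~$G$'' for the free-factor and free-product claims---whereas you carry out the bookkeeping explicitly via the mutually inverse maps $\varphi$ and~$\psi$; the underlying idea is the same.
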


\begin{proof}
Consider the~mapping of~the~generators of~$G$ to~$A_{\mathcal{J}}$ that acts identically on~the~generators of~the~groups~$A_{j}$ ($j \in \mathcal{J}$) and~takes the~generators of~the~groups~$A_{i}$ ($i \in \mathcal{I} \setminus \mathcal{J}$) to~the~trivial element. Obviously, this mapping can be~extended to~a~homomorphism, which is~identical on~the~subgroup~$A_{\mathcal{J}}$. Therefore, the~latter is~a~retract of~$G$. The~other two statements can be~easily proved by~grouping the~generators and~relations in~the~presentation of~the~group~$G$.
\end{proof}

Let $\mathcal{C}$ be~a~root class of~groups. It~is~easy to~see that any subgroup of~a~residually $\mathcal{C}$\nobreakdash-group is~also residually a~$\mathcal{C}$\nobreakdash-group. Therefore, the~next proposition completely solves the~question of~whether the~free product of~a~family of~groups is~residually a~$\mathcal{C}$\nobreakdash-group.

\begin{eproposition}\label{ep25}
\textup{\cite[Theorem~2]{AzarovTieudjo2002PIvSU}}
If~$\mathcal{C}$ is~a~root class of~groups\textup{,} then the~free product of~any family of~residually $\mathcal{C}$\nobreakdash-groups is~itself residually a~$\mathcal{C}$\nobreakdash-group.
\end{eproposition}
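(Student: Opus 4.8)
The plan is to reduce to finitely many factors and then to separate a fixed non-trivial element by mapping $G$ into an unrestricted wreath product that lies in~$\mathcal{C}$. Since only finitely many factors occur in the reduced form of a given $g \in G \setminus \{1\}$, Proposition~\ref{ep24} lets me restrict attention to a finite sub-free-product $A_{\mathcal{J}} = \operatorname{sgp}\{A_j \mid j \in \mathcal{J}\}$: as $A_{\mathcal{J}}$ is a retract of~$G$ and $g \in A_{\mathcal{J}}$, any homomorphism of $A_{\mathcal{J}}$ onto a $\mathcal{C}$\nobreakdash-group not killing~$g$ can be precomposed with the retraction to yield one on~$G$. Thus it suffices to prove the statement for a finite free product $G = A_{j_{1}} * \dots * A_{j_{m}}$.

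Next, fix $g \neq 1$ with reduced form $g = c_1 \cdots c_n$, the syllables $c_k$ lying in pairwise distinct adjacent factors. Using the residual $\mathcal{C}$\nobreakdash-ness of each~$A_j$ together with the fact that a root class is closed under finite direct products (so finite intersections of subgroups from $\mathcal{C}^{*}(A_j)$ again belong to $\mathcal{C}^{*}(A_j)$, by Proposition~\ref{ep21}), I would choose for each~$j$ a homomorphism $\alpha_j \colon A_j \to \bar{A}_j \in \mathcal{C}$ that is non-trivial on every syllable of~$g$ lying in~$A_j$. These assemble into a homomorphism $\Phi \colon G \to \bar{G} := \bar{A}_{j_{1}} * \dots * \bar{A}_{j_{m}}$, and since the reduced form is preserved, $\bar{g} := g\Phi$ is again non-trivial. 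It now suffices to separate $\bar{g}$ from~$1$ in~$\bar{G}$ by a homomorphism onto a $\mathcal{C}$\nobreakdash-group.

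In $\bar{G}$ consider the canonical map $\pi \colon \bar{G} \to Q := \prod_j \bar{A}_j$; here $Q \in \mathcal{C}$ because $\mathcal{C}$ is closed under finite direct products. If $\bar{g} \notin \ker\pi$, then $\pi(\bar{g}) \neq 1$ and $\pi\Phi$ already does the job. The essential case is $\bar{g} \in \bar{K} := \ker\pi$. By the Kurosh subgroup theorem $\bar{K}$ meets no conjugate of a factor, hence it is a free group and therefore residually a $\mathcal{C}$\nobreakdash-group; pick $\phi \colon \bar{K} \to D \in \mathcal{C}$ with $\bar{g}\phi \neq 1$. The key move is to induce $\phi$ up to~$\bar{G}$: via the Krasner--Kaloujnine embedding $\mu \colon \bar{G} \hookrightarrow \bar{K} \wr Q$ attached to the extension $1 \to \bar{K} \to \bar{G} \to Q \to 1$ (with transversal normalized so that the identity coset maps to~$1$), followed by applying $\phi$ coordinatewise, I obtain a homomorphism $\Psi \colon \bar{G} \to D \wr Q$. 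Because $\mathcal{C}$ is closed under unrestricted wreath products and $D, Q \in \mathcal{C}$, the target $D \wr Q$ lies in~$\mathcal{C}$. Since $\bar{g} \in \bar{K}$, its image under~$\mu$ lies in the base group with identity-coordinate equal to~$\bar{g}$, so the identity-coordinate of $\Psi(\bar{g})$ equals $\bar{g}\phi \neq 1$; hence $\Psi\Phi$ separates~$g$.

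The main obstacle is precisely this essential case, where $\bar{g}$ lies in the cartesian subgroup~$\bar{K}$ and the quotient~$Q$ may be infinite. One cannot simply replace $\ker\phi \trianglelefteq \bar{K}$ by its normal core in~$\bar{G}$: that core is an intersection of infinitely many conjugates, i.e.\ of infinitely many subgroups from $\mathcal{C}^{*}(\bar{K})$, which need not belong to $\mathcal{C}^{*}(\bar{K})$ since a root class is closed only under \emph{finite} direct products. Passing to the unrestricted wreath product $D \wr Q$ is exactly what bypasses this difficulty: closure under unrestricted (not merely restricted) wreath products keeps the target of the induced homomorphism inside~$\mathcal{C}$ regardless of the cardinality of~$Q$. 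Checking that $\mu$ is a homomorphism with the stated coordinate behaviour, and that the coordinatewise application of~$\phi$ commutes with the $Q$\nobreakdash-shift, is routine bookkeeping and is where the only calculations reside.
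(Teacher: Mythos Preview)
The paper does not supply its own proof of Proposition~\ref{ep25}; the result is simply quoted from~\cite[Theorem~2]{AzarovTieudjo2002PIvSU}, so there is no argument in the paper to compare yours against. Your proof is correct and is in fact the classical Gruenberg-type argument that underlies the cited reference: reduce to a finite free product of $\mathcal{C}$-groups, pass to the cartesian subgroup~$\bar K$, and use the Kaloujnine--Krasner embedding together with closure of~$\mathcal{C}$ under unrestricted wreath products to push a separating homomorphism on~$\bar K$ up to~$\bar G$.

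One point you should make explicit, however. The assertion ``$\bar K$ is a free group and therefore residually a $\mathcal{C}$-group'' is itself a non-trivial theorem of Gruenberg~\cite{Gruenberg1957PLMS}, proved independently via the lower central series and embeddings of finitely generated torsion-free nilpotent groups (or of finite $p$-groups) into iterated wreath products of cyclic groups. Without that citation the argument is circular: a free group is a free product of copies of~$\mathbb{Z}$, each residually a $\mathcal{C}$-group, so ``free groups are residually~$\mathcal{C}$'' is exactly the special case of the proposition you are proving in which every factor is infinite cyclic. Once Gruenberg's result for free groups is granted, the rest of your proof --- the coordinatewise extension of~$\phi$ to a homomorphism $\bar K \wr Q \to D \wr Q$ and the verification that the identity coordinate of $\Psi(\bar g)$ equals $\phi(\bar g)\ne 1$ --- is routine and correct.
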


Let~$G$ be~the~free product of~groups~$A$ and~$B$. Recall (see, for~example,~\cite[Theorem~4.1]{MagnusKarrassSolitar1974}) that each element $g \in G$ can be uniquely represented as~a~product $1 \,{\cdot}\, g_{1}g_{2} \ldots g_{n}$, where $n \geqslant 0$, $g_{1}, g_{2}, \ldots, g_{n} \in (A \cup B) \setminus \{1\}$, and,~for~$n > 1$, no~two adjacent factors~$g_{i}$ and~$g_{i+1}$ lie simultaneously in~$A$ or~in~$B$. This product is~referred to~as~the~\emph{reduced form} of~the~element~$g$, and~the~number~$n$ is~said to~be~the~\emph{length} of~this element. The~latter is~denoted below by~$\ell(g)$.

\pagebreak

Obviously, every free product of~two groups can be~considered as~the~generalized free product of~these groups with~the~trivial amalgamated subgroup (to~avoid ambiguity, we always refer to~a~free product with~an~amalgamated subgroup as~the~\emph{generalized} one). Therefore, the~next proposition is~a~special case of~Theorem~2.1 from~\cite{Sokolov2024SMJ}.

\begin{eproposition}\label{ep26}
Suppose that $\mathcal{C}$ is~a~root class of~groups and~$G$ is~the~free product of~groups~$A$ and~$B$\kern-.5pt{}. Suppose also that $G$\kern-.5pt{} has a~homomorphism which maps~it onto~a~$\mathcal{C}$\nobreakdash-group and~acts injectively on~$A$ and~$B$. If~a~$\mathfrak{P}(\mathcal{C})^{\prime}$\nobreakdash-iso\-lat\-ed subgroup of~$G$ satisfies a~non-triv\-ial identity\textup{,} then it~is~$\mathcal{C}$\nobreakdash-sep\-a\-ra\-ble in~$G$. In~particular\textup{,} $G$~is~residually a~$\mathcal{C}$\nobreakdash-group.
\end{eproposition}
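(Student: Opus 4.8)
The plan is to reduce the statement to a theorem already in the literature. Regard the free product $G = A * B$ as the generalized free product $A *_{\{1\}} B$ amalgamating the trivial subgroup. Theorem~2.1 of~\cite{Sokolov2024SMJ} asserts, for a generalized free product, that every $\mathfrak{P}(\mathcal{C})^{\prime}$\nobreakdash-isolated subgroup satisfying a non-trivial identity is $\mathcal{C}$\nobreakdash-separable, under suitable hypotheses; so it remains only to verify that those hypotheses hold when the amalgamated subgroup is trivial.

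That verification is the one step to carry out. The required homomorphism of $G$ onto a $\mathcal{C}$\nobreakdash-group acting injectively on the factors is assumed outright. The conditions on the amalgamated subgroup $\{1\}$ are automatic: $\{1\}$ belongs to every root class, being a subgroup of a non-trivial member of~$\mathcal{C}$, and $\{1\}$ is $\mathfrak{P}(\mathcal{C})^{\prime}$\nobreakdash-isolated in $A$, in $B$, and in $G$. Only the latter needs an argument, and a short one: if $\mathcal{C}$ contains a non-periodic group then $\mathfrak{P}(\mathcal{C})$ is the set of all primes and there is nothing to check; otherwise, for a prime $q \notin \mathfrak{P}(\mathcal{C})$, a non-trivial $x$ with $x^{q} = 1$ would be a torsion element, hence conjugate into $A$ or $B$, and its image under the given homomorphism would be a non-trivial element of order $q$ in a $\mathcal{C}$\nobreakdash-group, forcing $q \in \mathfrak{P}(\mathcal{C})$ --- a contradiction. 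With the hypotheses verified, Theorem~2.1 yields the main assertion.

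For the final sentence I would apply the assertion just obtained to $H = \{1\}$: this subgroup is abelian, hence satisfies the non-trivial identity $[x_{1}, x_{2}] \equiv 1$, and it is $\mathfrak{P}(\mathcal{C})^{\prime}$\nobreakdash-isolated in $G$ by the previous paragraph; its $\mathcal{C}$\nobreakdash-separability in $G$ is by definition precisely the statement that $G$ is residually a $\mathcal{C}$\nobreakdash-group.

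I do not expect a real obstacle at the level of this proposition, since its content is entirely absorbed into Theorem~2.1; the care needed is only the bookkeeping above. Were one to seek a self-contained proof, the substantive work would lie in Theorem~2.1 itself: a Kurosh-subgroup-theorem analysis showing that a subgroup of $A * B$ satisfying a non-trivial identity is conjugate into a factor or is virtually cyclic, followed by a separate $\mathcal{C}$\nobreakdash-separability argument in each case, the first exploiting that each factor is a retract of~$G$ (Propositions~\ref{ep23} and~\ref{ep24}).
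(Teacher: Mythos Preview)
Your proposal is correct and follows exactly the paper's approach: the paper simply remarks (in the paragraph preceding the proposition) that a free product is a generalized free product with trivial amalgamated subgroup, and hence the proposition is a special case of Theorem~2.1 from~\cite{Sokolov2024SMJ}. You carry out the same reduction, merely adding the routine verification of the hypotheses and the derivation of the final clause, details the paper leaves implicit.
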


\begin{eproposition}\label{ep27}
\textup{\cite[Theorem~7]{KarrassSolitar1970TAMS}}
Let~$G$ be~the~generalized free product of~groups~$A$ and~$B$ with~an~amalgamated subgroup~$U$. If~a~subgroup~$H$ of~$G$ satisfies a~non-triv\-ial identity\textup{,} then it~is~one of~the~following\textup{:}

\textup{a\kern.5pt)\hspace{1ex}}a~subgroup of~a~conjugate of~$A$ or~$B$\textup{;}

\textup{b)\hspace{1ex}}the~union of~a~countable non-de\-scend\-ing sequence of~subgroups\textup{,} each of~which has the~form $gUg^{-1} \cap H$ for~some $g \in G$\textup{;}

\textup{c\kern1.2pt)\hspace{1ex}}an~extension of~the~above union by~an~infinite cyclic group\textup{;}

\textup{d)\hspace{1ex}}the~generalized free product of~groups~$\tilde A$ and~$\tilde B$ with~an~amalgamated subgroup~$\tilde U$\textup{,} where $\tilde A, \tilde B \in \{g_{a}^{\vphantom{1}}Ag_{a}^{-1} \cap H \mid g_{a}^{\vphantom{1}} \in G\} \cup \{g_{b}^{\vphantom{1}}Bg_{b}^{-1} \cap H \mid g_{b}^{\vphantom{1}} \in G\}$\textup{,} $\tilde U = gUg^{-1} \cap H$ for~some $g \in G$\textup{,} and~$[\tilde A : \tilde U] = 2 = [\tilde B : \tilde U]$.
\end{eproposition}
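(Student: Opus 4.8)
The plan is to prove the statement via Bass--Serre theory, treating $G = A *_U B$ as the fundamental group of a graph of groups and analysing the induced action of $H$ on the associated tree. Let $T$ be the Bass--Serre tree of the splitting $G = A *_U B$: after a barycentric subdivision (to avoid edge inversions) it carries a $G$\nobreakdash-action with one orbit of edges, whose stabilizers are the conjugates $gUg^{-1}$, and two orbits of vertices, whose stabilizers are the conjugates $gAg^{-1}$ and $gBg^{-1}$. Restricting this action to $H$ and invoking the fundamental structure theorem for groups acting on trees, $H$ is the fundamental group of the quotient graph of groups $H\backslash\!\backslash T$, whose vertex groups are the subgroups $H \cap gAg^{-1}$ and $H \cap gBg^{-1}$ and whose edge groups are the subgroups $H \cap gUg^{-1}$. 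The hypothesis that $H$ satisfies a non-trivial identity enters only through its consequence that $H$ contains no non-abelian free subgroup, and the whole argument reduces to classifying the actions of such an $H$ on $T$.

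Two facts drive the case analysis. First, a ping-pong argument (the Tits alternative for tree actions): if $H$ contains two hyperbolic isometries of $T$ with distinct axes, then $H$ contains a free subgroup of rank~$2$; hence, under our hypothesis, either $H$ fixes a vertex, or $H$ fixes an end of~$T$, or all hyperbolic elements of~$H$ share one common axis, i.e.~$H$ preserves a line. Second, if $P *_Q R$ is a generalized free product in which $Q$ is properly contained in both $P$ and $R$, then it contains a free subgroup of rank~$2$ meeting $Q$ trivially, unless $[P:Q] = [R:Q] = 2$; in that exceptional case $Q$ is normal and the quotient is the infinite dihedral group $C_2 * C_2$. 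I would establish or cite both facts before entering the main dichotomy.

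The case analysis then reproduces the four alternatives. If $H$ fixes a vertex of $T$, it lies in the corresponding stabilizer $gAg^{-1}$ or $gBg^{-1}$, which is case~(a). If $H$ fixes a unique end $\xi$ but no vertex, then along the ray toward $\xi$ the edge-stabilizer intersections $H \cap g_{n} U g_{n}^{-1}$ form a non-descending chain, and its union $K$ is precisely the set of elliptic elements of $H$; when $H = K$ this is case~(b), and when $H$ additionally contains an isometry $t$ translating toward $\xi$, this $t$ normalizes $K$ and induces $H/K \cong \mathbb{Z}$, giving case~(c). Finally, if $H$ preserves a line but fixes no end, its quotient graph of groups is a single segment, so $H = \tilde A *_{\tilde U} \tilde B$ with $\tilde A, \tilde B, \tilde U$ of the required form; here both inclusions $\tilde U \subseteq \tilde A$ and $\tilde U \subseteq \tilde B$ are proper (otherwise the amalgam collapses and $H$ fixes a vertex, returning to case~(a)), so the second fact forces $[\tilde A : \tilde U] = [\tilde B : \tilde U] = 2$, which is exactly case~(d).

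I expect the main obstacle to be the rigorous separation of the non-elliptic actions into the end-fixing type (cases~(b),~(c)) and the invariant-line type (case~(d)), together with the verification that no further configuration avoids a free subgroup. This requires a careful treatment of how the elliptic and hyperbolic elements of $H$ interact: one must show that when $H$ fixes an end the elliptic elements genuinely exhaust an ascending union of edge-stabilizer intersections along a single ray, and that when $H$ stabilizes a line without fixing an end the quotient is forced to be a segment rather than a loop. The original argument of Karrass and Solitar carries this out combinatorially, through length and normal-form estimates in $A *_U B$ rather than through the tree, and reconciling the tree-theoretic output above with the precise ``increasing union'' and ``index~$2$'' formulations in the statement is the delicate final step.
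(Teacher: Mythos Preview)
The paper does not prove Proposition~\ref{ep27}; it is quoted as Theorem~7 of~\cite{KarrassSolitar1970TAMS} with no argument given, so there is no in-paper proof to compare against. Your Bass--Serre sketch is the standard modern route to this result and is essentially sound, whereas Karrass and Solitar (writing before Serre's theory was available) argued combinatorially with reduced forms and length estimates; the tree picture is conceptually cleaner, while their normal-form approach yields the ``increasing union'' description in~(b) and~(c) more directly.

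Two points in your sketch need tightening. First, the ping-pong claim ``two hyperbolic isometries with distinct axes force a free subgroup'' is false as stated: when the two axes share a single end the elements can generate a metabelian group. The correct input is that hyperbolic elements whose axes share \emph{no} end yield, after passing to powers, a free subgroup; your trichotomy then follows from the elementary fact that a family of lines in a tree which pairwise share an end must either all coincide or all contain one common end. Second, in case~(d) the quotient $H\backslash L$ need not be a single edge---the dihedral fundamental domain on~$L$ can span several edges of~$T$---but every internal vertex group equals the adjacent edge groups (only the two endpoints are reflection points), so the iterated amalgam collapses to a single $\tilde A *_{\tilde U} \tilde B$ with the required index-$2$ inclusions. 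Neither issue is fatal. (The barycentric subdivision is also unnecessary: the tree for $A *_U B$ is bipartite, so $G$ already acts without inversions.)
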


\begin{eproposition}\label{ep28}
Let~$G$ be~the~free product of~groups~$A$ and~$B$. If~a~subgroup~$H$ of~$G$ satisfies a~non-triv\-ial identity\textup{,} then one and~only one of~the~following statements holds.{\parfillskip=0pt\par}

\textup{1.\hspace{1ex}}The~subgroup~$H$ is~conjugate to~a~subgroup of~$A$ or~of~$B$.

\textup{2.\hspace{1ex}}The~subgroup~$H$ is~infinite cyclic and~is~conjugate to~no~subgroup lying in~$A \cup B$.

\textup{3.\hspace{1ex}}The~subgroup~$H$\kern-1.5pt{} is~the~non-abelian split extension of~an~infinite cyclic group by~a~group of~order~$2$ and~is~conjugate to~no~subgroup lying in~$A \cup B$.
\end{eproposition}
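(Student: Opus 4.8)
The plan is to apply Proposition~\ref{ep27} with $U = \{1\}$, since the free product $G = A * B$ is the generalized free product with trivial amalgamated subgroup, and then to analyse each of the four cases (a)--(d). The key simplification throughout is that every subgroup of the form $gUg^{-1} \cap H$ is trivial, so the ``union of a non-descending sequence'' appearing in cases (b), (c), (d) collapses to the trivial group. First I would dispose of case~(a): here $H$ lies in a conjugate of $A$ or $B$, which is exactly statement~1 of Proposition~\ref{ep28}. In case~(b), the union is trivial, so $H = \{1\}$, which is a (degenerate) case of statement~1 — a subgroup of $A$.

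Next I would treat case~(c): $H$ is an extension of the trivial group by an infinite cyclic group, hence $H$ is infinite cyclic. If $H$ is conjugate to a subgroup lying in $A \cup B$, we are in statement~1; otherwise we are in statement~2. For case~(d), with $\tilde U = gUg^{-1} \cap H = \{1\}$ and $[\tilde A : \tilde U] = 2 = [\tilde B : \tilde U]$, the subgroups $\tilde A$ and $\tilde B$ both have order~$2$, so $H = \tilde A * \tilde B$ is the free product of two groups of order~$2$, i.e.\ the infinite dihedral group $D_\infty$. This is the non-abelian split extension of an infinite cyclic group (the subgroup of index~$2$ generated by the product of the two involutions) by a group of order~$2$. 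Again, if $H$ is conjugate to a subgroup lying in $A \cup B$ then it embeds in a free product factor and must be, say, a finite or infinite cyclic-or-dihedral subgroup there; but $D_\infty$ contains elements of order~$2$, and a conjugate of $A \cup B$-subgroup that is $D_\infty$ would have to sit inside $A$ or inside $B$, which is consistent with statement~1 only if such a configuration occurs — so in the generic situation where $H$ is conjugate to no subgroup lying in $A \cup B$, we land in statement~3.

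The remaining task is to verify the mutual exclusivity of the three statements and that they exhaust all possibilities. Exhaustiveness follows from the case analysis above: cases~(a) and~(b) give statement~1, case~(c) gives statement~1 or~2, case~(d) gives statement~1 or~3. For mutual exclusivity: statement~1 is incompatible with~2 and~3 by definition, since the latter two explicitly require $H$ to be conjugate to no subgroup lying in $A \cup B$; and statements~2 and~3 are incompatible because an infinite cyclic group is abelian while the extension in statement~3 is not. One subtle point I would check carefully is that the dichotomy ``$H$ conjugate to a subgroup of $A \cup B$'' versus ``not'' is exactly the right seam: when $H$ from case~(d) does happen to be conjugate into $A \cup B$, it is captured by statement~1, so no case is lost.

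**Main obstacle.** The substantive work is already done by Proposition~\ref{ep27}; the only real care needed is the bookkeeping in case~(d) — recognising that two index-$2$ extensions of the trivial group amalgamated over the trivial group yield precisely $D_\infty$, identifying its structure as a split extension of $\mathbb{Z}$ by $C_2$, and confirming it is non-abelian — together with a clean argument that the three listed outcomes are pairwise disjoint.
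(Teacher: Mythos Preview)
Your proposal is correct and follows essentially the same route as the paper: apply Proposition~\ref{ep27} with $U=\{1\}$, note that every $gUg^{-1}\cap H$ collapses to the trivial group, and read off the four cases. The only substantive difference is in case~(d): the paper works out the structure of $\tilde A * \tilde B$ by hand (showing with reduced forms that $ab$ has infinite order, that $\langle ab\rangle$ is normal but not central, and that $H=\tilde A\langle ab\rangle$ with trivial intersection), whereas you invoke the identification with $D_\infty$ and take its semidirect-product description as known; both are fine. Your explicit check of mutual exclusivity is a nice addition that the paper leaves implicit.
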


\begin{proof}
Indeed, let~us consider the~group~$G$ as~the~generalized free product of~the~groups~$A$ and~$B$ with~the~trivial amalgamated subgroup~$U$. Then, by~Proposition~\ref{ep27}, the~subgroup~$H$ is~one of~the~following:

a\kern.5pt)\hspace{1ex}a~subgroup of~a~conjugate of~$A$ or~$B$;

b)\hspace{1ex}a~trivial group;

c\kern1.2pt)\hspace{1ex}an~infinite cyclic group;

d)\hspace{1ex}the~free product of~groups~$\tilde A$ and~$\tilde B$ of~order~$2$.

Suppose that the~last possibility is~realized and~the~symbols~$a$ and~$b$ denote the~generators of~the~groups~$\tilde A$ and~$\tilde B$, respectively. Since, for~each $k \geqslant 1$, the~element~$(ab)^{k}$ has a~reduced form of~length~$2k$ in~the~free product~$H$, the~cyclic subgroup~$Z$ generated by~$ab$ is~infinite. It~follows from~the~equalities $a^{-1} = a$, $b^{-1} = b$, $(ab)^{-1} = ba$, and~$a^{-1}(ab)a = ba = b^{-1}(ab)b$ that the~subgroup~$Z$ is~normal in~$H$, but~does~not lie in~the~center of~this group. It~is~also obvious that $H = \tilde AZ$ and~$\tilde A \cap Z = 1$. Hence, $H$~is~the~non-abelian split extension of~the~infinite cyclic group~$Z$ by~the~group~$\tilde A$ of~\mbox{order}~$2$.
\end{proof}

Given a~class of~groups~$\mathcal{C}$, let~us say that a~group~$X$ is~\emph{$\mathcal{C}$\nobreakdash-quasi-regu\-lar} with~respect to~its subgroup~$Y$ if, for~each subgroup $M \in \mathcal{C}^{*}(Y)$, there exists a~subgroup $N \in \mathcal{C}^{*}(X)$ such that $N \cap Y \leqslant M$. The~next proposition is~a~special case of~Corollary~2.4 from~\cite{Sokolov2024SMJ}.{\parfillskip=0pt\par}

\begin{eproposition}\label{ep29}
Suppose that $\mathcal{C}$ is~a~root class of~groups\textup{,} $G$~is~the~generalized free product of~residually $\mathcal{C}$\nobreakdash-groups~$A$ and~$B$ with~an~amalgamated subgroup~$U$\textup{,} and~$H$ is~a~cyclic subgroup of~$G$ which is~conjugate to~no~subgroup lying in~$A \cup B$. Suppose also that $U$ is~$\mathcal{C}$\nobreakdash-sepa\-ra\-ble in~the~groups~$A$ and~$B$\textup{,} while $G$ is~$\mathcal{C}$\nobreakdash-quasi-regu\-lar with~respect to~the~subgroups~$A$ and~$B$. If~the~subgroup~$H$ is~$\mathfrak{P}(\mathcal{C})^{\prime}$\nobreakdash-iso\-lat\-ed in~$G$\textup{,} then it~is~$\mathcal{C}$\nobreakdash-sep\-a\-ra\-ble in~this~group.{\parfillskip=0pt\par}
\end{eproposition}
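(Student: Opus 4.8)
The plan is to verify the hypotheses of the general separability criterion \cite[Corollary~2.4]{Sokolov2024SMJ}, of which this proposition is the cyclic instance, but first let me describe the mechanism underlying that criterion in the present situation, since it shows which properties are really used. Since $H$ is conjugate to no subgroup lying in $A \cup B$, no generator of $H$ is conjugate into a factor; as torsion in a generalized free product is always conjugate into a factor, $H$ must be infinite cyclic, say $H = \operatorname{sgp}\{h\}$ with $h$ of infinite order. Because $\mathcal{C}$\nobreakdash-separability, $\mathfrak{P}(\mathcal{C})^{\prime}$\nobreakdash-isolatedness, and the property of being conjugate into no factor are all invariant under replacing $H$ by a conjugate, I would conjugate so that $h$ is cyclically reduced; then $h$ has even length $2k \geqslant 2$, and every power of $h$ is again cyclically reduced, of length $2k|n|$.

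Fix $g \in G \setminus H$; the goal is to produce $N \in \mathcal{C}^{*}(G)$ with $g \notin HN$. First I would use the $\mathcal{C}$\nobreakdash-separability of $U$ in $A$ and in $B$, together with the closure of $\mathcal{C}^{*}(A)$ and $\mathcal{C}^{*}(B)$ under finite intersections (Proposition~\ref{ep21}), to choose $M_{A} \in \mathcal{C}^{*}(A)$ and $M_{B} \in \mathcal{C}^{*}(B)$ that induce a \emph{common} normal subgroup of $U$ (i.e.\ $M_{A} \cap U = M_{U} = M_{B} \cap U$) and are fine enough that the syllables of $h$ and of $g$ lying outside $U$ keep images outside $UM_{A}$, respectively $UM_{B}$. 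Since these maps agree on $U$, they assemble into a homomorphism of $G$ onto the generalized free product $\bar{A} *_{\bar{U}} \bar{B}$ (the amalgam being genuine because $M_{A} \cap U = M_{U}$ makes $\bar{U} \to \bar{A}$ injective, and likewise for $B$). Under this map the reduced form of $h$ is preserved: its image is cyclically reduced of length $2k$, hence of infinite order, while for a sufficiently fine choice the image of $g$ continues to lie outside the cyclic subgroup generated by the image of $h$.

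It remains to descend from this amalgam of $\mathcal{C}$\nobreakdash-groups to a genuine $\mathcal{C}$\nobreakdash-quotient of $G$. Here I would invoke the $\mathcal{C}$\nobreakdash-quasi-regularity of $G$ with respect to $A$ and $B$: it yields $N_{A}, N_{B} \in \mathcal{C}^{*}(G)$ with $N_{A} \cap A \leqslant M_{A}$ and $N_{B} \cap B \leqslant M_{B}$, and, setting $N = N_{A} \cap N_{B} \in \mathcal{C}^{*}(G)$ (Proposition~\ref{ep21}), the group $G/N$ lies in $\mathcal{C}$ and the images of $A$ and $B$ in it refine $\bar{A}$ and $\bar{B}$. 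One then checks that the image of $h$ retains enough of its length to have large order in $G/N$ and that $g \notin HN$. The one obstruction the normal-form bookkeeping cannot itself remove is the ``root'' case, in which the image of $g$ could coincide with a power of the image of $h$ merely because $g^{q} \in H$ for some prime $q \notin \mathfrak{P}(\mathcal{C})$ whose torsion is invisible to every $\mathcal{C}$\nobreakdash-group; this is excluded precisely by the hypothesis that $H$ is $\mathfrak{P}(\mathcal{C})^{\prime}$\nobreakdash-isolated.

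I expect the main obstacle to be the simultaneous control in the first step: matching the traces of $M_{A}$ and $M_{B}$ on $U$ while keeping them fine enough to preserve the full reduced form of $h$ and to separate $g$, and then ensuring that after the quasi-regular descent the order of the image of $h$ is still large enough for the separation to survive. This is exactly the technical content of \cite[Corollary~2.4]{Sokolov2024SMJ}; as a cyclic subgroup satisfies a non-trivial identity and the remaining hypotheses coincide with those assumed here, the cleanest route is to quote that corollary directly once the specialization to the cyclic case has been recorded.
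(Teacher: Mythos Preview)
Your proposal is correct and matches the paper's approach exactly: the paper does not give an independent proof but simply records that the proposition is a special case of \cite[Corollary~2.4]{Sokolov2024SMJ}, which is precisely the citation you arrive at in your final paragraph. Your additional sketch of the mechanism behind that corollary is accurate and helpful, but it goes beyond what the paper itself supplies.
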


\begin{eproposition}\label{ep20}
Suppose that $\mathcal{C}$ is~a~root class of~groups and~$G$ is~the~free product of~residually $\mathcal{C}$\nobreakdash-groups~$A$ and~$B$. Suppose also that $H$ is~a~cyclic subgroup of~$G$ which is~conjugate to~no~subgroup lying in~$A \cup B$. If~$H$ is~$\mathfrak{P}(\mathcal{C})^{\prime}$\nobreakdash-iso\-lat\-ed in~$G$\textup{,} then it~is~$\mathcal{C}$\nobreakdash-sep\-a\-ra\-ble in~this group.
\end{eproposition}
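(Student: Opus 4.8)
The~plan is~to~obtain this proposition as~the~special case of~Proposition~\ref{ep29} in~which the~amalgamated subgroup is~trivial. Accordingly, I~would regard $G$ as~the~generalized free product of~$A$ and~$B$ with~amalgamated subgroup $U = 1$. Under this identification the~subgroup~$H$ is~still cyclic, still conjugate to~no~subgroup lying in~$A \cup B$, and~still $\mathfrak{P}(\mathcal{C})^{\prime}$\nobreakdash-isolated in~$G$, so the~only hypotheses of~Proposition~\ref{ep29} that remain to~be~verified are~that $U = 1$ is~$\mathcal{C}$\nobreakdash-separable in~$A$ and~in~$B$, and~that $G$ is~$\mathcal{C}$\nobreakdash-quasi-regular with~respect to~the~subgroups~$A$ and~$B$.

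The~first of~these is~immediate: as~recalled in~Section~\ref{es01}, the~$\mathcal{C}$\nobreakdash-separability of~the~trivial subgroup of~a~group is~exactly the~property of~being residually a~$\mathcal{C}$\nobreakdash-group, and~$A$, $B$ are~residually $\mathcal{C}$\nobreakdash-groups by~hypothesis.

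The~substantive point is~the~quasi-regularity, and~the~reason it~holds here --- in~contrast to~a~genuine amalgam --- is~that a~free factor of~$G$ is~not~merely a~subgroup but~a~retract of~$G$ (Proposition~\ref{ep24}). I~would argue for~$A$; the~argument for~$B$ is~symmetric. Given $M \in \mathcal{C}^{*}(A)$, so that $A/M \in \mathcal{C}$, let $\rho \colon G \to A$ be~a~retracting homomorphism supplied by~Proposition~\ref{ep24}, and~let $N$ be~the~kernel of~the~composition of~$\rho$ with~the~canonical epimorphism $A \to A/M$. Since $\rho$ is~surjective, so is~this composition, whence $G/N \cong A/M \in \mathcal{C}$ and~therefore $N \in \mathcal{C}^{*}(G)$. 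Since $\rho$ acts identically on~$A$, an~element $a \in A$ lies in~$N$ if and~only if it~lies in~$M$; hence $N \cap A = M$, and~in~particular $N \cap A \leqslant M$. Thus $G$ is~$\mathcal{C}$\nobreakdash-quasi-regular with~respect to~$A$, and~likewise with~respect to~$B$.

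Having verified all the~hypotheses, I~would conclude by~invoking Proposition~\ref{ep29} directly to~get that $H$ is~$\mathcal{C}$\nobreakdash-separable in~$G$. I~do~not anticipate any real difficulty here: the~only thing to~recognize is~that the~statement is~the~trivial-amalgam instance of~Proposition~\ref{ep29}, and~the~only computation needed is~the~one-line verification of~quasi-regularity via the~retraction~$\rho$.
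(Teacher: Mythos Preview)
Your proposal is correct and follows essentially the same route as the paper: reduce to Proposition~\ref{ep29} with $U = 1$, observe that the $\mathcal{C}$\nobreakdash-separability of~$U$ in~$A$ and~$B$ is just residual~$\mathcal{C}$, and verify $\mathcal{C}$\nobreakdash-quasi-regularity by composing the retraction $G \to A$ (resp.\ $G \to B$) from Proposition~\ref{ep24} with the quotient map onto~$A/M$. The paper's proof differs only in notation.
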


\begin{proof}
Suppose that $M \in \mathcal{C}^{*}(A)$ and~$\varepsilon\colon A \to A/M$ is~the~natural homomorphism. Since $A$ is~a~retract of~$G$ by~Proposition~\ref{ep24}, there exists a~homomorphism $\sigma\colon G \to A$ which acts identically on~$A$. If~$N = \ker\sigma\varepsilon$, then $N \in \mathcal{C}^{*}(G)$ and~$N \cap A = M$, as~it~is~easy to~see. Therefore, the~group~$G$ is~$\mathcal{C}$\nobreakdash-quasi-regu\-lar with~respect to~$A$. Its~$\mathcal{C}$\nobreakdash-quasi-regu\-lar\-i\-ty with~respect to~$B$ can be~proved similarly. Since $A$ and~$B$ are~residually $\mathcal{C}$\nobreakdash-groups, their trivial subgroups are~$\mathcal{C}$\nobreakdash-sep\-a\-ra\-ble. Thus, the~desired assertion follows from~Proposition~\ref{ep29} if $G$ is~considered as~the~generalized free product of~the~groups~$A$ and~$B$ with~the~trivial amalgamated subgroup.
\end{proof}

\section{Proof of~Theorem}\label{es03}

\begin{eproposition}\label{ep31}
Suppose that $\mathcal{C}$ is~a~root class of~groups and~$G$ is~the~free product of~residually $\mathcal{C}$\nobreakdash-groups~$A$ and~$B$. Suppose also that $H$ is~a~subgroup of~$G$\textup{,} this subgroup is~the~non-abelian split extension of~an~infinite cyclic group by~a~group of~order~$2$ and~is~conjugate to~no~subgroup lying in~$A \cup B$. If~$H$ is~$\mathfrak{P}(\mathcal{C})^{\prime}$\nobreakdash-iso\-lat\-ed in~$G$\textup{,} then it~is~$\mathcal{C}$\nobreakdash-sep\-a\-ra\-ble in~this group.
\end{eproposition}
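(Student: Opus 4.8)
The plan is to reduce Proposition~\ref{ep31} to the cyclic case already handled by Proposition~\ref{ep20}. By hypothesis, $H$ is a non-abelian split extension $H = \operatorname{sgp}\{t\} \rtimes \operatorname{sgp}\{s\}$, where $t$ generates an infinite cyclic normal subgroup $Z = \operatorname{sgp}\{t\}$ and $s$ has order~$2$, with $s t s^{-1} = t^{-1}$ (the only non-trivial action of a group of order~$2$ on~$\mathbb Z$ giving a non-abelian group). First I would verify that $Z$ is characteristic in $H$ (it is the unique maximal normal subgroup that is infinite cyclic, or one can note $Z$ is the set of elements whose centralizer in $H$ is infinite), so that $Z$ is $\mathfrak{P}(\mathcal{C})^{\prime}$-isolated in~$H$: indeed $H/Z$ has order~$2$, and if $x^q \in Z$ for some prime $q \notin \mathfrak{P}(\mathcal{C})$, then since $2 \in \mathfrak{P}(\mathcal{C})$ always (every root class contains a non-trivial group, hence a group with an element of order a prime, and by the wreath-product closure contains elements of every order in $\mathfrak{P}(\mathcal{C})$, and in fact $2 \in \mathfrak{P}(\mathcal{C})$ — this needs a small argument, or one simply observes $q$ is odd so $x \notin H \setminus Z$ forces a contradiction modulo~$2$). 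Combining this with Proposition~\ref{ep22}(2) and the hypothesis that $H$ is $\mathfrak{P}(\mathcal{C})^{\prime}$-isolated in~$G$, we get that $Z$ is $\mathfrak{P}(\mathcal{C})^{\prime}$-isolated in~$G$. Moreover $Z$ is conjugate to no subgroup of $A \cup B$, since any conjugate of $Z$ lying in $A \cup B$ would, after conjugating, make $Z \leqslant g(A \cup B)g^{-1}$ and then $H$ itself would be forced into a conjugate of $A$ or $B$ (as $H$ is generated by $Z$ together with an element normalizing it, and the normalizer of a subgroup of a free factor is controlled — alternatively use Proposition~\ref{ep28} applied to $Z$, which being infinite cyclic is in case~1 or~2, and case~1 would contradict the hypothesis on~$H$). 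Hence by Proposition~\ref{ep20}, $Z$ is $\mathcal{C}$-separable in~$G$.

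Next I would upgrade the $\mathcal{C}$-separability of $Z$ to that of $H$. The key point is that $[H : Z] = 2$ is finite, and there is a general principle: if $Z$ is $\mathcal{C}$-separable in $G$ and $Z \leqslant H$ with $[H:Z]$ finite and lying in $\mathcal{C}$-controllable range, and if in addition $G$ is suitably $\mathcal{C}$-quasi-regular, then $H$ is $\mathcal{C}$-separable. Concretely: take $g \in G \setminus H$. If $g \notin Z$ and moreover $g$ lies outside $H$, I want a homomorphism $\sigma$ of $G$ onto a $\mathcal{C}$-group with $g\sigma \notin H\sigma$. Since $Z$ is $\mathcal{C}$-separable and $Z \leqslant H$, the hard case is when $g \in HZ = H$ — but that is excluded. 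So for $g \notin H \supseteq Z$, use $\mathcal{C}$-separability of $Z$ to find $N_0 \in \mathcal{C}^{*}(G)$ with $g \notin ZN_0$. This does not immediately give $g \notin HN_0$. To fix this, I would intersect over the (finitely many, namely two) cosets: write $H = Z \cup Zs$, so $g \notin H$ means $g \notin Z$ and $g \notin Zs$, i.e. $g \notin Z$ and $gs^{-1} \notin Z$. Apply $\mathcal{C}$-separability of $Z$ twice to get $N_1, N_2 \in \mathcal{C}^{*}(G)$ with $g \notin ZN_1$ and $gs^{-1} \notin ZN_2$; put $N = N_1 \cap N_2 \in \mathcal{C}^{*}(G)$ by Proposition~\ref{ep21} (a root class is closed under finite direct products since it is closed under subgroups and unrestricted, hence finite, wreath products which contain the direct product). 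Then in $G/N$ the image of $g$ avoids both $ZN/N$ and $ZsN/N$, hence avoids $HN/N$, so $g\sigma \notin H\sigma$ where $\sigma\colon G \to G/N$. This establishes $\mathcal{C}$-separability of~$H$.

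The main obstacle I anticipate is the bookkeeping around the prime~$2$ and the isolatedness transfer: one must be careful that $2 \in \mathfrak{P}(\mathcal{C})$ is not automatic from the bare definition — it follows because a root class, being closed under unrestricted wreath products and containing a non-trivial group, contains a non-trivial group $C$, and then $C \wr C$, and iterating one can realize elements of order equal to any prime dividing $|C|$ or any prime in $\mathfrak{P}(\mathcal{C})$; but to get $2 \in \mathfrak{P}(\mathcal{C})$ specifically, note that if $C$ has an element $c \neq 1$, then in a wreath product $C \wr \mathbb{Z}_p$ built appropriately, or more simply: the argument I actually need is only that a prime $q \notin \mathfrak{P}(\mathcal{C})$ is odd, which is what the definition must give for the statement of Proposition~\ref{ep31} to have content — and this is where I would lean on the established fact (used implicitly throughout the literature cited) that $2 \in \mathfrak{P}(\mathcal{C})$ for every root class~$\mathcal{C}$. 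A secondary subtlety is confirming that $H$ being conjugate to no subgroup of $A \cup B$ genuinely forces the same for its characteristic subgroup~$Z$; the cleanest route is to invoke Proposition~\ref{ep28} for the infinite cyclic subgroup~$Z$ and rule out its case~(1) by observing that $s$ normalizes $Z$, so if $Z$ were conjugate into a free factor then by standard free-product theory (the normalizer of a non-trivial subgroup conjugate into a free factor is contained in the corresponding conjugate of that factor) the element $s$, and hence all of $H$, would lie in a conjugate of $A$ or~$B$, a contradiction.
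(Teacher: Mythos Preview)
Your overall strategy matches the paper's: reduce to the $\mathcal{C}$-separability of the infinite cyclic normal subgroup~$Z$ via Proposition~\ref{ep20}, then upgrade to~$H$ by the two-coset argument $H = Z \cup sZ$ together with Proposition~\ref{ep21}. That final step is exactly what the paper does.

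There is, however, one genuine error. You assert, and later ``lean on,'' the claim that $2 \in \mathfrak{P}(\mathcal{C})$ for every root class~$\mathcal{C}$. This is false: the class of all finite $p$-groups for an odd prime~$p$ is a root class with $\mathfrak{P}(\mathcal{C}) = \{p\}$. What rescues the argument is the specific hypothesis of the proposition: $G$ contains an element~$s$ of order~$2$, and $G$ is residually a $\mathcal{C}$-group (Proposition~\ref{ep25}), so some $\mathcal{C}$-group has an element of order~$2$, whence $2 \in \mathfrak{P}(\mathcal{C})$. The paper obtains this by first conjugating~$H$ so that the order-$2$ element lies in~$A$, and then using that $A$ is residually a $\mathcal{C}$-group; your version works just as well without the preliminary conjugation, but you must state the correct reason.

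On the question of why $Z$ is conjugate to no subgroup of $A \cup B$, your normalizer argument is valid and in fact cleaner than the paper's: the paper carries out an explicit reduced-form computation with induction on length, whereas you invoke the standard fact that in a free product the normalizer of a non-trivial subgroup contained in a conjugate of a free factor lies in that same conjugate, so $s \in N_{G}(Z)$ would drag all of~$H$ into a conjugate of~$A$ or~$B$. Both routes work; yours is shorter.
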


\begin{proof}
As~it~is~known (see, for~example,~\cite[Corollary~4.1.4]{MagnusKarrassSolitar1974}), every element of~finite order of~the~group~$G$ is~conjugate to~an~element of~$A$ or~of~$B$. Therefore, we may replace the~subgroup~$H$ by~its conjugate, rename the~free factors~$A$ and~$B$, if~necessary, and~assume further that $H$ is~the~non-abelian split extension of~an~infinite cyclic subgroup $Z \leqslant G$ by~a~subgroup $Y \leqslant A$ of~order~$2$. Let $Y = \{1,y\}$, and~let $z$ be~a~generator of~$Z$. Then $y^{-1}zy = z^{-1}$. We~claim that the~subgroup~$Z$ is~conjugate to~no~subgroup contained in~$A \cup B$.

Let, on~the~contrary, $x^{-1}Zx \subseteq A \cup B$ for~some $x \in G$, and~let $1 \,{\cdot}\, x_{1}x_{2} \ldots x_{n}$ be~the~reduced form of~$x$. We~will argue by~induction on~$n$. If~$n = 0$ and~$Z \leqslant A$, then $H = YZ \leqslant A$ despite the~condition of~the~proposition. If~$n = 0$ and~$Z \leqslant B$, then the~product $1 \,{\cdot}\, y^{-1}zyz$ is~the~reduced form of~the~element~$y^{-1}zyz$, and~we get a~contradiction with~the~equality $y^{-1}zy = z^{-1}$.

Suppose that $n \geqslant 1$, $x^{-1}Zx \leqslant A$, and~hence $z = x_{1}^{\vphantom{1}}x_{2}^{\vphantom{1}} \ldots x_{n}^{\vphantom{1}}ax_{n}^{-1} \ldots x_{2}^{-1}x_{1}^{-1}$ for~some $a \in\nolinebreak A \setminus \{1\}$. If~$x_{n} \in A$ and~$x^{\prime}= x_{1}x_{2} \ldots x_{n-1}$, then $(x^{\prime})^{-1}Zx^{\prime} \leqslant A$ and~the~inductive hypothesis can be~used. Let $x_{n} \in B$. In~this case, the~product $1 \,{\cdot}\, x_{1}^{\vphantom{1}}x_{2}^{\vphantom{1}} \ldots x_{n}^{\vphantom{1}}a^{\pm 1}_{\vphantom{1}}x_{n}^{-1} \ldots x_{2}^{-1}x_{1}^{-1}$ is~the~reduced form of~the~element~$z^{\pm 1}$, and~this element is~of~length~$2n+1$. If~$x_{1} \in B$, we have $\ell(y^{-1}zy) = 2n+3 > 2n+1 = \ell(z^{-1})$, which is~impossible. Since the~reduced form of~the~element $y^{-1}zy = z^{-1}$ is~unique, it~follows from~the~inclusion $x_{1} \in A$ that $y^{-1}x_{1} = x_{1}$ and~$y = 1$, which is~also impossible. The~same arguments can be~used in~the~case where $n \geqslant 1$ and~$x^{-1}Zx \leqslant B$.

Thus, $Z$ is~conjugate to~no~subgroup contained in~$A \cup B$. Since $A$ is~residually a~$\mathcal{C}$\nobreakdash-group, the~inclusion $Y \leqslant A$ implies that $2 \in \mathfrak{P}(\mathcal{C})$. Therefore, $Z$ is~$\mathfrak{P}(\mathcal{C})^{\prime}$\nobreakdash-iso\-lat\-ed in~the~subgroup~$H$. The~latter is~$\mathfrak{P}(\mathcal{C})^{\prime}$\nobreakdash-iso\-lat\-ed in~$G$. Hence, $Z$~is~also $\mathfrak{P}(\mathcal{C})^{\prime}$\nobreakdash-iso\-lat\-ed in~$G$ by~Proposition~\ref{ep22} and~is~$\mathcal{C}$\nobreakdash-sep\-a\-ra\-ble in~this group by~Proposition~\ref{ep20}.

To~prove the~$\mathcal{C}$\nobreakdash-sepa\-ra\-bil\-ity of~$H$, let~us fix an~element $g \in G \setminus H$ and~find a~subgroup $N \in\nolinebreak \mathcal{C}^{*}(G)$ satisfying the~relation $g \notin HN$. Since $g \notin H = Z \cup yZ$, we have $g \notin Z$ and~$y^{-1}g \notin Z$. It~follows from~the~$\mathcal{C}$\nobreakdash-sepa\-ra\-bil\-ity of~$Z$ and~Proposition~\ref{ep21} that there exists a~subgroup $N \in \mathcal{C}^{*}(G)$ such that $g \notin ZN$ and~$y^{-1}g \notin ZN$. If~$g \in HN = ZN \cup\nolinebreak yZN$, then $g \in ZN$ or~$y^{-1}g \in ZN$ despite the~choice of~$N$. Therefore, the~subgroup~$N$ is~the~desired~one.
\end{proof}

\begin{eproposition}\label{ep32}
Suppose that $\mathcal{C}$ is~a~root class of~groups and~$G$ is~the~free product of~residually $\mathcal{C}$\nobreakdash-groups~$A$ and~$B$. Suppose also that $H$ is~a~subgroup of~$G$ which is~contained in~$A \cup B$ and~satisfies a~non-triv\-ial identity. If~an~element $g \in G$ and~the~subgroup~$H$ do~not lie in~the~same free factor\textup{,} then there exists a~homomorphism~$\sigma$ of~$G$ onto~a~group from~$\mathcal{C}$ such that $g\sigma \notin H\sigma$.
\end{eproposition}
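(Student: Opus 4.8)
The plan is to reduce to the case of two factors and use the retraction structure together with the residual $\mathcal{C}$-property of the factors. Since $H \subseteq A \cup B$, we have either $H \leqslant A$ or $H \leqslant B$; by symmetry assume $H \leqslant A$, so that the hypothesis "$g$ and $H$ do not lie in the same free factor" gives us $g \notin A$ (and also $g \notin B$ unless $H$ is trivial, but that degenerate case is immediate since then $H\sigma$ is trivial for any $\sigma$ that separates $g$ from $1$, which exists as $G$ is residually a $\mathcal{C}$-group by Proposition~\ref{ep25}). So assume $H$ is nontrivial and $H \leqslant A$, $g \notin A$. Write the reduced form $1 \cdot g_{1}g_{2}\ldots g_{n}$ of $g$ with $n \geqslant 1$; since $g \notin A$ we have $n \geqslant 1$, and in fact we may arrange (after possibly replacing $g$ by a suitable element — but we cannot freely conjugate here since $H\sigma$ must be computed for the fixed $H$, so instead) we simply keep $g$ as is.

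The key idea: since $A$ is a retract of $G$ (Proposition~\ref{ep24}), there is a retracting homomorphism $\rho\colon G \to A$ acting identically on $A$ and killing $B$. If $g\rho \notin H$, then because $A$ is residually a $\mathcal{C}$-group we can find $M \in \mathcal{C}^{*}(A)$ with $g\rho \notin HM$ (using that $H$ is a subgroup of $A$ and applying $\mathcal{C}$-separability of the trivial subgroup composed with the fact that $\mathcal{C}$ is closed under subgroups — more precisely, $A/M$ with $M$ chosen so the images differ); composing $\rho$ with $A \to A/M$ yields the desired $\sigma$ with $g\sigma \notin H\sigma$, since $H\sigma \leqslant (A/M)$ and $g\sigma$ avoids it. The main obstacle is therefore the case $g\rho \in H$, i.e.\ the retraction to $A$ sends $g$ into $H$; here $\rho$ alone cannot separate. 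To handle this, observe that $g$ has a nonempty $B$-part in its normal form. I would use a different homomorphism: pick, via residual $\mathcal{C}$-ness of $B$, some $L \in \mathcal{C}^{*}(B)$ such that at least one $B$-syllable of $g$ survives modulo $L$; simultaneously pick $M \in \mathcal{C}^{*}(A)$. By Proposition~\ref{ep25} (or directly by the root-class machinery) the product map $G \to (A/M) * (B/L)$ followed by a suitable further homomorphism onto a $\mathcal{C}$-group separates $g$ (which has a genuinely reduced form of length $\geqslant 1$ with a nontrivial $B$-syllable, hence lies outside the image of $A$) from the image of $H$ (which lies inside the image of $A/M$, a subgroup of the free product). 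The point is that an element of $(A/M)*(B/L)$ lying in the copy of $A/M$ cannot equal an element whose reduced form contains a nontrivial $B/L$-syllable.

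Concretely the steps are: (1) reduce to $H$ nontrivial, $H \leqslant A$, $g \notin A$; (2) write the reduced form of $g$ and locate a $B$-syllable $g_{k} \in B \setminus \{1\}$; (3) using that $B$ is residually a $\mathcal{C}$-group, choose $L \in \mathcal{C}^{*}(B)$ with $g_{k} \notin L$, and arbitrarily choose $M \in \mathcal{C}^{*}(A)$; (4) form the natural homomorphism $\varphi\colon G \to (A/M) * (B/L)$; by the construction, $g\varphi$ has a reduced form in this free product containing the nontrivial syllable $g_{k}L$, so $g\varphi \notin \operatorname{sgp}\{A/M\}$, while $H\varphi \leqslant \operatorname{sgp}\{A/M\}$ because $H \leqslant A$; hence $g\varphi \notin H\varphi$; (5) since $(A/M)*(B/L)$ is the free product of two $\mathcal{C}$-groups, by Proposition~\ref{ep25} it is residually a $\mathcal{C}$-group, so there is a homomorphism $\psi$ of it onto a $\mathcal{C}$-group with $g\varphi\psi \notin (H\varphi)\psi$ (using that $H\varphi$ is a finitely-related... no — using simply that we may separate the single element $g\varphi$ from the subgroup $H\varphi$: here I would invoke that $\operatorname{sgp}\{A/M\}$ is $\mathcal{C}$-separable in the free product since it is a retract and the free product is residually a $\mathcal{C}$-group, Propositions~\ref{ep23} and~\ref{ep25}, as $\mathcal{C}$ is a root class hence closed under subgroups and extensions; then $g\varphi \notin \operatorname{sgp}\{A/M\} \supseteq H\varphi$ is witnessed by a homomorphism onto a $\mathcal{C}$-group). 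Then $\sigma = \varphi\psi$ works. The step I expect to be the main obstacle is (5), reconciling "separate $g\varphi$ from the retract $\operatorname{sgp}\{A/M\}$" with "separate $g\varphi$ from the possibly-smaller subgroup $H\varphi$" — but since $H\varphi \subseteq \operatorname{sgp}\{A/M\}$, separating $g\varphi$ from the larger retract automatically separates it from $H\varphi$, so this is in fact routine once Propositions~\ref{ep23} and~\ref{ep25} are in hand.
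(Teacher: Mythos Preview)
Your overall strategy coincides with the paper's: pass to a quotient free product $(A/M)\ast(B/L)$ with $A/M,\,B/L\in\mathcal{C}$, arrange that the image of~$g$ lies outside the copy of~$A/M$ (which contains the image of~$H$), and then separate inside that smaller free product. However, step~(4) as written contains a genuine gap.

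\textbf{The gap.} Keeping a \emph{single} $B$\nobreakdash-syllable alive while choosing $M\in\mathcal{C}^{*}(A)$ ``arbitrarily'' does \emph{not} force $g\varphi\notin A/M$. If an $A$\nobreakdash-syllable is killed by~$M$, the neighbouring $B$\nobreakdash-syllables merge in $(A/M)\ast(B/L)$ and may cancel completely. For instance, take $g=b\,a\,b^{-1}$ with $a\in A\setminus\{1\}$, $b\in B\setminus\{1\}$ (reduced, length~$3$, not in~$A$). Choose $g_{k}=b$, any $L\in\mathcal{C}^{*}(B)$ with $b\notin L$, and any $M\in\mathcal{C}^{*}(A)$ with $a\in M$. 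Then $g\varphi=(bL)\cdot 1\cdot(b^{-1}L)=1\in H\varphi$, and the argument collapses. The assertion that ``$g\varphi$ has a reduced form containing the nontrivial syllable $g_{k}L$'' is therefore false in general.

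\textbf{The fix (this is exactly what the paper does).} Choose $M$ and~$L$ so that \emph{every} syllable of the reduced form $g_{1}g_{2}\ldots g_{n}$ survives: for each $g_{i}\in A$ pick $M_{i}\in\mathcal{C}^{*}(A)$ with $g_{i}\notin M_{i}$, for each $g_{i}\in B$ pick $L_{i}\in\mathcal{C}^{*}(B)$ with $g_{i}\notin L_{i}$, and set $M=\bigcap M_{i}$, $L=\bigcap L_{i}$ (Proposition~\ref{ep21} keeps these in $\mathcal{C}^{*}$). Then $(g_{1}\varphi)\ldots(g_{n}\varphi)$ is again a reduced form, of length $n\geqslant 1$ with $g_{1}\varphi\in B/L$ if $n=1$, so $g\varphi\notin A/M$.

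\textbf{A remark on step~(5).} Once step~(4) is repaired, your step~(5) is in fact slightly more economical than the paper's argument: the paper invokes Proposition~\ref{ep26} (and hence the non\nobreakdash-trivial identity hypothesis on~$H$) to show $H\varphi$ itself is $\mathcal{C}$\nobreakdash-separable in $(A/M)\ast(B/L)$, whereas you only need that the retract $A/M\supseteq H\varphi$ is $\mathcal{C}$\nobreakdash-separable there (Propositions~\ref{ep24},~\ref{ep25},~\ref{ep23}), which already suffices since $g\varphi\notin A/M$. So, modulo the correction above, your route is a mild simplification that does not use the identity hypothesis at all.
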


\begin{proof}
Without lost of~generality, we may assume that $H \leqslant A$. Let $1 \,{\cdot}\, g_{1}g_{2} \ldots g_{n}$ be~the~reduced form of~$g$. Then $n \geqslant 1$ and~$g_{1} \in B$ if~$n = 1$. For~each $i \in \{1, 2, \ldots, n\}$, let~us define subgroups $M_{i}$ and~$N_{i}$ as~follows. If~$g_{i} \in A$, we put $N_{i} = B$ and~find a~subgroup $M_{i} \in \mathcal{C}^{*}(A)$ satisfying the~relation $g_{i} \notin M_{i}$ (such a~subgroup certainly exists because $A$ is~residually a~$\mathcal{C}$\nobreakdash-group). Similarly, if~$g_{i} \in B$, we take~$A$ as~$M_{i}$ and~choose a~subgroup $N_{i} \in \mathcal{C}^{*}(B)$ which does~not contain~$g_{i}$.

Let $M = \bigcap_{i=1}^{n} M_{i}$ and~$N = \bigcap_{i=1}^{n} N_{i}$. Then $A/M,\,B/N \in \mathcal{C}$ by~Proposition~\ref{ep21} and,~for~any $i \in \{1, 2, \ldots, n\}$, $g_{i}M \ne 1$ if~$g_{i} \in A$, and~$g_{i}N \ne 1$ if~$g_{i} \in B$. It~is~easy to~see that the~natural homomorphisms $A \to A/M$ and~$B \to B/N$ can be~extended to~a~homomorphism~$\rho$ of~the~group~$G$ onto~the~free product~$G_{M,N}$ of~the~groups~$A/M$ and~$B/N$. It~is~also obvious that the~product $1 \,{\cdot}\, (g_{1}\rho)(g_{2}\rho) \ldots (g_{n}\rho)$ is~the~reduced form of~the~element~$g\rho$ and~$g_{1}\rho \in B/N$ if~$n = 1$. Therefore, $g\rho \notin H\rho$.

Since $A\rho\kern-.5pt{} =\kern-1.5pt{} A/M\kern-2pt{} \in\kern-.5pt{} \mathcal{C}$ and~$B\kern-.5pt{}\rho\kern-.5pt{} =\kern-1.5pt{} B/N\kern-2pt{} \in\kern-.5pt{} \mathcal{C}$, the~subgroup\kern-.5pt{}~$H\kern-1.5pt{}\rho$ is~$\mathcal{C}$\nobreakdash-sep\-a\-ra\-ble in~the~group\kern-.5pt{}~$A\rho$ and,~by~Proposition~\ref{ep25}, the~free product~$G_{M,N}$ is~residually a~$\mathcal{C}$\nobreakdash-group. The~subgroup~$A\rho$ is~a~retract of~$G_{M,N}$ by~Proposition~\ref{ep24} and~is~$\mathcal{C}$\nobreakdash-sep\-a\-ra\-ble in~this group by~Proposition~\ref{ep23}. Hence, the~subgroup~$H\rho$ is~$\mathfrak{P}(\mathcal{C})^{\prime}$\nobreakdash-iso\-lat\-ed in~the~group~$A\rho$, the~subgroup~$A\rho$ is~$\mathfrak{P}(\mathcal{C})^{\prime}$\nobreakdash-iso\-lat\-ed in~the~group~$G_{M,N}$, and~the~subgroup~$H\rho$ is~$\mathfrak{P}(\mathcal{C})^{\prime}$\nobreakdash-iso\-lat\-ed in~the~group~$G_{M,N}$ by~Proposition~\ref{ep22}. It~is~also clear that the~identity mappings of~the~groups~$A\rho$ and~$B\rho$ induce a~homomorphism of~$G_{M,N}$ onto~the~direct product~$A\rho \times B\rho$, which belongs to~$\mathcal{C}$. Therefore, the~subgroup~$H\rho$ is~$\mathcal{C}$\nobreakdash-sep\-a\-ra\-ble in~the~group~$G_{M,N}$ by~Proposition~\ref{ep26} and the~homomorphism~$\rho$ can be~extended to~the~desired one.
\end{proof}

\begin{eproposition}\label{ep33}
Suppose that $\mathcal{C}$ is~a~root class of~groups\textup{,} $G$~is~the~free product of~residually $\mathcal{C}$\nobreakdash-groups~$A_{1}$\textup{,} $A_{2}$\textup{,}~\ldots\textup{,}~$A_{n}$ \textup{(}$n \geqslant 1$\textup{),} and~$H$ is~a~$\mathfrak{P}(\mathcal{C})^{\prime}$\nobreakdash-iso\-lat\-ed subgroup of~$G$ satisfying a~non-triv\-ial identity. If\textup{,}~for~any $i \in \{1, 2, \ldots, n\}$\textup{,} the~subgroup~$H$ is~conjugate to~no~$\mathcal{C}$\nobreakdash-de\-fec\-tive subgroup of~$A_{i}$\textup{,} then it~is~$\mathcal{C}$\nobreakdash-sep\-a\-ra\-ble in~$G$ and\textup{,} therefore\textup{,} is~not $\mathcal{C}$\nobreakdash-de\-fec\-tive in~this~group.
\end{eproposition}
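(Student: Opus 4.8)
The plan is to argue by induction on~$n$. When $n = 1$ we have $G = A_{1}$, and the hypothesis says in particular that~$H$, being conjugate to itself in~$A_{1}$, is~not $\mathcal{C}$\nobreakdash-de\-fec\-tive in~$A_{1}$; since $H$ is~$\mathfrak{P}(\mathcal{C})^{\prime}$\nobreakdash-iso\-lat\-ed in~$G = A_{1}$, the~definition of~a~$\mathcal{C}$\nobreakdash-de\-fec\-tive subgroup forces $H$ to~be~$\mathcal{C}$\nobreakdash-sep\-a\-ra\-ble in~$G$. So let $n \geqslant 2$, assume the~statement for~free products of~fewer than~$n$ residually $\mathcal{C}$\nobreakdash-groups, and~put $A = A_{1}$ and~$B = \operatorname{sgp}\{A_{2}, \ldots, A_{n}\}$, so~that $G = A * B$. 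By~Proposition~\ref{ep25} the~group~$B$ is~residually a~$\mathcal{C}$\nobreakdash-group, and~by~Proposition~\ref{ep24} both $A$ and~$B$ are~retracts of~$G$ while~$B$ decomposes as~the~free product $A_{2} * \cdots * A_{n}$.

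Next I~would apply Proposition~\ref{ep28} to~the~decomposition $G = A * B$. If~$H$ is~infinite cyclic and~conjugate to~no~subgroup lying in~$A \cup B$ (case~2), then $H$ is~$\mathcal{C}$\nobreakdash-sep\-a\-ra\-ble in~$G$ by~Proposition~\ref{ep20}; if~$H$ is~a~non-abelian split extension of~an~infinite cyclic group by~a~group of~order~$2$ and~is~conjugate to~no~subgroup lying in~$A \cup B$ (case~3), then $H$ is~$\mathcal{C}$\nobreakdash-sep\-a\-ra\-ble in~$G$ by~Proposition~\ref{ep31}. Hence it~remains to~handle case~1, where $H$ is~conjugate to~a~subgroup of~$A$ or~of~$B$. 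Since $\mathcal{C}$\nobreakdash-sepa\-ra\-bil\-ity in~$G$, $\mathfrak{P}(\mathcal{C})^{\prime}$\nobreakdash-iso\-lat\-ed\-ness in~$G$, the~property of~satisfying a~fixed identity, and~the~property of~being conjugate to~no~$\mathcal{C}$\nobreakdash-de\-fec\-tive subgroup of~any~$A_{i}$ are~all unaffected by~replacing a~subgroup by~one of~its conjugates, I~may assume that $H$ is~actually contained in~$F$, where $F$ is~$A$ or~$B$.

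The~next step is~to~verify that $H$ is~$\mathcal{C}$\nobreakdash-sep\-a\-ra\-ble in~the~free factor~$F$. It~is~immediate from~the~definition that~$H$, being $\mathfrak{P}(\mathcal{C})^{\prime}$\nobreakdash-iso\-lat\-ed in~$G$, is~$\mathfrak{P}(\mathcal{C})^{\prime}$\nobreakdash-iso\-lat\-ed in~$F$. If~$F = A = A_{1}$, then $H$ is~conjugate in~$A_{1}$ to~no~$\mathcal{C}$\nobreakdash-de\-fec\-tive subgroup of~$A_{1}$, so~in~particular $H$ itself is~not $\mathcal{C}$\nobreakdash-de\-fec\-tive in~$A_{1}$, and~the~definition again yields the~$\mathcal{C}$\nobreakdash-sepa\-ra\-bil\-ity of~$H$ in~$A_{1}$. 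If~$F = B$, then $H$ is~a~$\mathfrak{P}(\mathcal{C})^{\prime}$\nobreakdash-iso\-lat\-ed subgroup of~$B = A_{2} * \cdots * A_{n}$ satisfying a~non-triv\-ial identity, and~any $\mathcal{C}$\nobreakdash-de\-fec\-tive subgroup of~some~$A_{i}$ to~which $H$ were conjugate inside~$B$ would witness the~same conjugacy inside~$G$; hence the~inductive hypothesis applies and~gives the~$\mathcal{C}$\nobreakdash-sepa\-ra\-bil\-ity of~$H$ in~$B$.

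Finally I~would promote this to~$\mathcal{C}$\nobreakdash-sepa\-ra\-bil\-ity in~$G$. Fix $g \in G \setminus H$. If~$g \in F$, then composing the~retracting homomorphism $G \to F$ of~Proposition~\ref{ep24} with~a~homomorphism of~$F$ onto~a~$\mathcal{C}$\nobreakdash-group that separates $g$ from~$H$ in~$F$ yields, because the~retraction is~the~identity on~$F \supseteq H$, a~homomorphism of~$G$ onto~a~$\mathcal{C}$\nobreakdash-group sending~$g$ outside the~image of~$H$. If~$g \notin F$, then $g$ and~$H$ do~not lie in~the~same free factor of~$G = A * B$, and~Proposition~\ref{ep32} supplies the~required homomorphism directly. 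Thus $H$ is~$\mathcal{C}$\nobreakdash-sep\-a\-ra\-ble in~$G$, which completes the~induction and, since a~$\mathcal{C}$\nobreakdash-sep\-a\-ra\-ble subgroup is~never $\mathcal{C}$\nobreakdash-de\-fec\-tive, proves the~proposition. I~expect no~serious obstacle: the~substantive cases are~already dispatched by~Propositions~\ref{ep20} and~\ref{ep31}, and~the~only points needing care are~the~(routine) conjugation-invariance used to~reduce to~$H \leqslant F$ and~the~correct choice of~free factor when invoking Proposition~\ref{ep32}.
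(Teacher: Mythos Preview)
Your proposal is correct and follows essentially the same approach as the paper: induction on~$n$, splitting $G$ as a free product of one factor and the free product of the remaining ones, invoking Propositions~\ref{ep28}, \ref{ep20}, and~\ref{ep31} for the two ``non-conjugate'' cases, and then combining the retraction onto the relevant free factor with Proposition~\ref{ep32} to handle the remaining case. The only cosmetic differences are that the paper splits off~$A_{n}$ rather than~$A_{1}$ and organizes the final step as three explicit cases ($H \leqslant A_{n}$, $g \in A_{n}$; $H \leqslant B$, $g \in B$; different factors) instead of your two ($g \in F$; $g \notin F$).
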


\begin{proof}
We will argue by~induction on~$n$. The~proposition is~trivial if~$n = 1$, and~we assume further that $n > 1$. By~Proposition~\ref{ep24}, the~group~$G$ is~the~free product of~the~groups~$A_{n}$ and~$B = \operatorname{sgp}\{A_{1}, A_{2}, \ldots, A_{n-1}\}$, while the~latter is~the~free product of~the~groups~$A_{1}$, $A_{2}$,~\ldots,~$A_{n-1}$. Since~$A_{1}$, $A_{2}$,~\ldots,~$A_{n}$ are~residually $\mathcal{C}$\nobreakdash-groups, it~follows from~Proposition~\ref{ep25} that $B$ and~$G$ are~also residually $\mathcal{C}$\nobreakdash-groups. This fact allows one to~apply Propositions~\ref{ep28},~\ref{ep20}, and~\ref{ep31} to~the~free product of~the~groups~$A_{n}$ and~$B$. If~Statement~2 or~Statement~3 of~Proposition~\ref{ep28} holds, then $H$ is~$\mathcal{C}$\nobreakdash-sep\-a\-ra\-ble in~$G$. Therefore, we may replace this subgroup by~its conjugate, if~necessary, and~assume further that $H \leqslant A_{n}$ or~$H \leqslant B$. To~prove the~$\mathcal{C}$\nobreakdash-sepa\-ra\-bil\-ity of~$H$, we fix an~element $g \in G \setminus H$ and~find a~homomorphism~$\sigma$ of~the~group~$G$ onto~a~group from~$\mathcal{C}$ such that $g\sigma \notin H\sigma$. Consider three~cases.

\textit{Case\hspace{.8ex}1.}\hspace{1ex}$H \leqslant A_{n}$ and~$g \in A_{n}$.

By~Proposition~\ref{ep24}, the~free factor~$A_{n}$ is~a~retract of~$G$. Obviously, the~subgroup~$H$ is~$\mathfrak{P}(\mathcal{C})^{\prime}$\nobreakdash-iso\-lat\-ed in~the~group~$A_{n}$ and~is~conjugate in~$A_{n}$ to~no~$\mathcal{C}$\nobreakdash-de\-fec\-tive subgroup. Hence, it~is~not $\mathcal{C}$\nobreakdash-de\-fec\-tive in~$A_{n}$ and,~therefore, is~$\mathcal{C}$\nobreakdash-sep\-a\-ra\-ble in~this group. It~follows that the~retracting homomorphism $G \to A_{n}$, which acts identically on~the~subgroup~$A_{n}$, can be~extended to~the~desired mapping.

\pagebreak

\textit{Case\hspace{.8ex}2.}\hspace{1ex}$H \leqslant B$ and~$g \in B$.

Similarly, the~subgroup~$H$ is~$\mathfrak{P}(\mathcal{C})^{\prime}$\nobreakdash-iso\-lat\-ed in~$B$ and,~for~any $i \in \{1, 2, \ldots, n-1\}$, is~conjugate to~no~$\mathcal{C}$\nobreakdash-de\-fec\-tive subgroup of~the~group~$A_{i}$. Hence, it~is~$\mathcal{C}$\nobreakdash-sep\-a\-ra\-ble in~$B$ by~the~inductive hypothesis, and~the~retracting homomorphism $G \to B$ can be~extended to~the~desired one.

\textit{Case\hspace{.8ex}3.}\hspace{1ex}Either $H \leqslant A_{n}$ and~$g \notin A_{n}$, or~$H \leqslant B$ and~$g \notin B$.

In~this case, the~existence of~the~required homomorphism is~guaranteed by~Proposition~\ref{ep32}.
\end{proof}

\begin{proof}[\textup{\textbf{Proof of~Theorem.}}]
\textit{Necessity.} Let~us argue by~contradiction. If~the~subgroup~$H$ is~not $\mathfrak{P}(\mathcal{C})^{\prime}$\nobreakdash-iso\-lat\-ed in~$G$, then it~is~not $\mathcal{C}$\nobreakdash-de\-fec\-tive in~this group by~the~definition of~the~latter property. Therefore, we may assume that $H$ is~$\mathfrak{P}(\mathcal{C})^{\prime}$\nobreakdash-iso\-lat\-ed in~$G$ and,~for~each $i \in \mathcal{I}$, is~conjugate to~no~$\mathcal{C}$\nobreakdash-de\-fec\-tive subgroup of~$A_{i}$. Let~us show that $H$ is~then $\mathcal{C}$\nobreakdash-sep\-a\-ra\-ble in~$G$ and,~hence, is~not $\mathcal{C}$\nobreakdash-de\-fec\-tive in~this group. If~the~set~$\mathcal{I}$ is~finite, the~$\mathcal{C}$\nobreakdash-sepa\-ra\-bil\-ity of~$H$ follows from~Proposition~\ref{ep33}. Therefore, $\mathcal{I}$~can be~assumed to~be~infinite. As~above, to~prove the~$\mathcal{C}$\nobreakdash-sepa\-ra\-bil\-ity of~$H$, we fix an~element $g \in G \setminus H$ and~find a~homomorphism~$\sigma$ of~$G$ onto~a~group from~$\mathcal{C}$ which satisfies the~condition $g\sigma \notin H\sigma$.

Let $\mathcal{J}$ be~a~finite subset of~$\mathcal{I}$ such that $g \in \operatorname{sgp}\{A_{j} \mid j \in \mathcal{J}\}$. Let also the~symbols~$A$ and~$B$ denote the~subgroups $\operatorname{sgp}\{A_{j} \mid j \in \mathcal{J}\}$ and~$\operatorname{sgp}\{A_{i} \mid i \in \mathcal{I} \setminus \mathcal{J}\}$, respectively. By~Proposition~\ref{ep24}, the~group~$G$ splits into~the~free product of~the~groups~$A$ and~$B$, which in~turn are~the~free products of~the~subgroups generating them. It~follows that~$A$, $B$, and~$G$ are~residually $\mathcal{C}$\nobreakdash-groups by~Proposition~\ref{ep25}. Let~us apply Propositions~\ref{ep28},~\ref{ep20}, and~\ref{ep31} to~the~group~$G$ considered as~the~free product of~the~groups~$A$ and~$B$. As~in~the~proof of~Proposition~\ref{ep33}, if~Statement~2 or~Statement~3 of~Proposition~\ref{ep28} holds, then $H$ is~$\mathcal{C}$\nobreakdash-sep\-a\-ra\-ble in~$G$. If~$H$ is~conjugate to~a~subgroup of~$B$ and~$\rho\colon G \to A$ is~the~retracting homomorphism, then $g\rho = g \ne 1 = H\rho$ and,~since $A$ is~residually a~$\mathcal{C}$\nobreakdash-group, $\rho$~can be~extended to~the~desired mapping. Therefore, we may assume further that $x^{-1}Hx \leqslant A$ for~some $x \in G$ and~$1 \,{\cdot}\, x_{1}x_{2} \ldots x_{n}$ is~the~reduced form of~the~element~$x$ in~the~free product of~$A$ and~$B$.

Let~us use induction on~$n$. If~$n = 0$, then $H \leqslant A$ and~the~retracting homomorphism $G \to\nolinebreak A$ can be~extended to~the~desired one due~to~Proposition~\ref{ep33}. If~$n \geqslant 1$, $x_{n} \in\nolinebreak A$, and~$x^{\prime}= x_{1}x_{2} \ldots x_{n-1}$, then $(x^{\prime})^{-1}Hx^{\prime} \leqslant A$ and~the~required homomorphism exists by~the~inductive hypothesis. Suppose that $n \geqslant 1$, $x_{n} \in B$, $\tilde H = x^{-1}Hx$, and~$\tilde g = x^{-1}gx$. If~$x_{1} \in B$, then $\ell(\tilde g) = 2n+1 \geqslant 3$. If~$x_{1} \in A$, it~follows from~the~relations $x_{n} \in B$ and~$g \notin H$ that $n \geqslant 2$, $x_{1}^{-1}gx_{1}^{\vphantom{1}} \ne 1$, and~hence $\ell(\tilde g) = 2n-1 \geqslant 3$. Thus, in~both cases, $\tilde H \leqslant A$, $\tilde g \notin A$, and~the~existence of~the~required homomorphism is~ensured by~Proposition~\ref{ep32}.

\medskip

\textit{Sufficiency.} Suppose that $x^{-1}Hx \leqslant A_{i}$ for~some $x \in G$ and~$i \in \mathcal{I}$, and~the~subgroup $\tilde H = x^{-1}Hx$ is~$\mathcal{C}$\nobreakdash-de\-fec\-tive in~the~group~$A_{i}$. Then $\tilde H$ is~$\mathfrak{P}(\mathcal{C})^{\prime}$\nobreakdash-iso\-lat\-ed in~$A_{i}$ and~there exists an~element $a \in A_{i} \setminus \tilde H$ such that $a\sigma_{i} \in \tilde H\sigma_{i}$ for~every homomorphism~$\sigma_{i}$ of~the~group~$A_{i}$ onto~a~group from~$\mathcal{C}$. Let $\sigma$ be~an~arbitrary homomorphism of~$G$ onto~a~$\mathcal{C}$\nobreakdash-group. Since the~class~$\mathcal{C}$ is~closed under taking subgroups, the~restriction of~$\sigma$ to~the~subgroup~$A_{i}$ maps the~latter onto~a~group from~$\mathcal{C}$. Therefore, $a\sigma \in \tilde H\sigma$, $xax^{-1}\sigma \in H\sigma$, and,~since $xax^{-1} \notin H$, the~subgroup~$H$ is~not $\mathcal{C}$\nobreakdash-sep\-a\-ra\-ble in~$G$.

As~noted above, $G$~is~residually a~$\mathcal{C}$\nobreakdash-group. Hence, the~subgroup~$A_{i}$ is~$\mathcal{C}$\nobreakdash-sep\-a\-ra\-ble in~$G$ due~to~Propositions~\ref{ep24} and~\ref{ep23}. This fact and~Proposition~\ref{ep22} imply that the~subgroups~$A_{i}$, $\tilde H$, and~$H$ are~$\mathfrak{P}(\mathcal{C})^{\prime}$\nobreakdash-iso\-lat\-ed in~the~group~$G$. Thus, the~subgroup~$H$ is~$\mathcal{C}$\nobreakdash-de\-fec\-tive in~$G$, as~required.
\end{proof}

\newpage

\end{document}